\newtheorem{theorem}{Theorem}[section]
\newtheorem{remark}[theorem]{Remark}
\newtheorem{corollary}[theorem]{Corollary}
\newtheorem{proposition}[theorem]{Proposition}
\newcommand{\minusre}{\hspace{0.3em}\raisebox{0.3ex}{\sl \tiny /}\hspace{0.3em}}
\newcommand{\minusli}{\hspace{0.3em}\raisebox{0.3ex}{\sl \tiny $\setminus $}\hspace{0.3em}}
\newcommand{\lex}{\,\overrightarrow{\times}\,}
\newcommand{\lsemiw}{\,\overrightarrow{\ltimes}\,}
\newcommand{\RDP}{\mbox{\rm RDP}}
\newcommand{\RIP}{\mbox{\rm RIP}}
\begin{document}
\title[RDP's and the Lexicographic Product]{Riesz Decomposition Properties and the Lexicographic Product of po-groups}
\date{}%
\maketitle

\begin{center}  \footnote{Keywords: po-group, $\ell$-group, strong unit, lexicographic product, unital po-group, antilattice po-group, effect algebra, pseudo effect algebra, Riesz Decomposition Property

AMS classification: 06D35, 03G12

This work was supported by  the Slovak Research and Development Agency under contract APVV-0178-11,  grant VEGA No. 2/0059/12 SAV, and GA\v{C}R 15-15286S.
 }
Mathematical Institute,  Slovak Academy of Sciences\\
\v Stef\'anikova 49, SK-814 73 Bratislava, Slovakia\\
$^2$ Depart. Algebra  Geom.,  Palack\'{y} University\\
17. listopadu 12, CZ-771 46 Olomouc, Czech Republic\\

E-mail: {\tt dvurecen@mat.savba.sk}
\end{center}

\begin{abstract}
We establish conditions when a certain type of the Riesz Decomposition Property (RDP) holds in the lexicographic product of two po-groups. It is well known that the resulting product is an $\ell$-group if and only if the first one is linearly ordered and the second one is an $\ell$-group. This can be equivalently studied as po-groups with a special type of the RDP. In the paper we study three different types of RDP's. RDP's of the lexicographic products are important for the study of pseudo effect algebras where infinitesimal elements play an important role both for algebras as well as for the first order logic of valid but not provable formulas.

\end{abstract}

\section{Introduction}

It is well known that there is a very close connection between some algebraic structures and $\ell$-groups or partially ordered groups (= po-groups). A typical case is a result of Mundici, see e.g. \cite{CDM}, saying that every MV-algebra is an interval in an Abelian unital $\ell$-group.  This result was extended for pseudo MV-algebras, a non-commutative generalization of MV-algebras in the sense of \cite{GeIo, Rac}, in \cite{Dvu1}, where a categorical equivalence of the category of pseudo MV-algebras and the category of unital $\ell$-groups (not necessarily Abelian) was established. More about a relation between other algebraic structures like BL-algebras or pseudo BL-algebras and $\ell$-groups was presented in \cite{260}.

There are also partial algebraic structures generalizing MV-algebras or pseudo MV-algebras which are connected with unital po-groups. Typical examples are effect algebras, see \cite{FoBe}, or pseudo effect algebras, see \cite{DvVe1,DvVe2}, with a basic operation $+$, addition, where $a+b$ denotes the disjunction of two mutually excluding events $a$ and $b$. Such structures are important for quantum structures which model events in quantum mechanical measurement, see e.g. \cite{DvPu}. For effect algebras and pseudo effect algebras, a crucial property is the so-called the Riesz Decomposition Property (RDP for short), which denotes a property that every two decompositions of the same element have a joint refinement decomposition. In \cite{Rav}, there was shown that every effect algebra with RDP is an interval in an Abelian unital po-group with RDP. This result was extended in \cite{DvVe1, DvVe2} for pseudo effect algebras with some kind of RDP, called RDP$_1$. In addition, MV-algebras or pseudo MV-algebras can be studied also in the realm of pseudo effect algebras as those with RDP$_2$, another type of RDP.

We recall that not every effect algebra admits RDP. A typical example is the the effect algebra $\mathcal E(H)$, which is the set of Hermitian operators between the zero operator and the identity operator of a complex separable Hilbert space $H$. This algebra is crucial for the study of the so-called Hilbert space quantum mechanics. Nevertheless RDP fails in $\mathcal E(H)$, it can be covered by a family of effect algebras with RDP, for more details see \cite{Pul}. Thus for $\mathcal E(H)$, RDP holds only locally and not globally.

A special class of $\ell$-groups or po-groups consists of groups of the form $G_1\lex G_2$, lexicographic product of two po-groups $G_1$ and $G_2$. The first algebraic model which uses the lexicographic product of $\mathbb Z$, the group of integers, with some Abelian $\ell$-group $G$, is a perfect MV-algebra studied in \cite{DiLe}. In perfect effect MV-algebras every element is either an infinitesimal or a co-infinitesimal.  The logic of perfect pseudo MV-algebras has a counterpart in the  Lindenbaum algebra of the first order \L ukasiewicz logic which is not semisimple, because the valid but unprovable formulas are precisely the formulas that correspond to co-infinitesimal elements of the Lindenbaum algebra, see e.g. \cite{DiGr}. In \cite{DiLe}, there was established that every perfect MV-algebra is an interval in the lexicographic product $\mathbb Z \lex G$, where $G$ is an Abelian $\ell$-group.

An analogous result for perfect effect algebras with RDP was established in \cite{177} and extended in \cite{277}. Lexicographic MV-algebras which are an interval in the lexicographic product $G_1\lex G_2$, where $G_1$ is an Abelian linearly ordered group and $G_2$ is an Abelian $\ell$-group, were studied in \cite{DFL}. The role of the lexicographic product of two po-groups was studied also in \cite{DvKr, DvKo}, where some conditions when the lexicographic product has RDP, RDP$_1$ or RDP$_2$ in special cases of $G_1$ and $G_2$ were established.

As we see, there is a growing interest to algebraic structures which can be represented by the lexicographic product of two po-groups with some type of the RDP's. Therefore, in the present paper we establish conditions when the lexicographic product $G_1\lex G_2$ has some kind of RDP. We present conditions when $G_1\lex G_2$ has RDP$_1$ if $G_1$ is a linearly ordered po-group, and when $G_1 \lex G_2$ has RDP if $G_1$ is an antilattice po-group with RDP, see Section 3. Another group construction closely connected with the lexicographic product is a wreath product and the left and right wreath products. For them we also establish conditions when wreath products have RDP, RDP$_1$ or RDP$_2$, see Sections 4 and 5.

\section{Pseudo Effect Algebras and po-groups}

The main object of our study is a po-group. This means that an algebraic structure $(G;+,-,0,\le)$ is a {\it po-group} (= partially ordered group) if $(G;+,-,0)$ is a group written in an additive way endowed with a partial order $\le$ such that if $a\le b,$ $a,b \in G,$ then $x+a+y \le x+b+y$ for all $x,y \in G.$  We denote by
$G^+:=\{g \in G: g \ge 0\}$ and $G^-:=\{g \in G: g \le 0\}$ the {\it positive cone} and the {\it negative cone} of $G.$ If, in addition, $G$
is a lattice under $\le$, we call it an $\ell$-group (= lattice
ordered group). An element $u \in G^+$ is said to be a {\it strong unit} (or an {\it order unit}) if, given $g \in G,$ there is an integer $n \ge 1$ such that $g \le nu.$ The pair $(G,u),$ where $u$ is a fixed strong unit of $G,$ is said to be a {\it unital po-group}. We recall that  the {\it lexicographic product} of two po-groups $G_1$ and $G_2$, written $G_1 \lex G_2$, is the group $G_1\times G_2$, where the group operations are defined by coordinates, and the lexicographic ordering $\le $ on $G_1 \times G_2$ is defined as follows: For $(g_1,h_1),(g_2,h_2) \in G_1 \times G_2,$  we have $(g_1,h_1)\le (g_2,h_2)$  whenever $g_1 <g_2$ or $g_1=g_2$ and $h_1\le h_2.$


A po-group $G$ is said to be {\it directed} if, given $g_1,g_2 \in G,$ there is an element $g \in G$ such that $g_1,g_2\le g$. Equivalently, $G$ is directed iff every element $g\in G$ can be expressed as a difference of two positive elements of $G.$ For example, every $\ell$-group or every  po-group with strong unit is directed.
For more information on po-groups and $\ell$-groups we recommend the books \cite{Dar, Fuc, Gla}.

A poset $(E;\le)$ is an {\it antilattice} if only comparable elements $a,b\in E$ have a joint  or  meet in $E$. A directed po-group $G$ is an antilattice iff $a\wedge b =0$ implies $a=0$ or $b=0$.

We say that an additively written  po-group $(G;+,-,0,\le)$ satisfies

\begin{enumerate}
\item[(i)]
the {\it Riesz Interpolation Property} (RIP for short) if, for $a_1,a_2, b_1,b_2\in G,$  $a_1,a_2 \le b_1,b_2$  implies there exists an element $c\in G$ such that $a_1,a_2 \le c \le b_1,b_2;$

\item[(ii)]
\RDP$_0$  if, for $a,b,c \in G^+,$ $a \le b+c$, there exist $b_1,c_1 \in G^+,$ such that $b_1\le b,$ $c_1 \le c$ and $a = b_1 +c_1;$

\item[(iii)]
\RDP\  if, for all $a_1,a_2,b_1,b_2 \in G^+$ such that $a_1 + a_2 = b_1+b_2,$ there are four elements $c_{11},c_{12},c_{21},c_{22}\in G^+$ such that $a_1 = c_{11}+c_{12},$ $a_2= c_{21}+c_{22},$ $b_1= c_{11} + c_{21}$ and $b_2= c_{12}+c_{22};$

\item[(iv)]
\RDP$_1$  if, for all $a_1,a_2,b_1,b_2 \in G^+$ such that $a_1 + a_2 = b_1+b_2,$ there are four elements $c_{11},c_{12},c_{21},c_{22}\in G^+$ such that $a_1 = c_{11}+c_{12},$ $a_2= c_{21}+c_{22},$ $b_1= c_{11} + c_{21}$ and $b_2= c_{12}+c_{22}$, and $0\le x\le c_{12}$ and $0\le y \le c_{21}$ imply  $x+y=y+x;$

\item[(v)]
\RDP$_2$  if, for all $a_1,a_2,b_1,b_2 \in G^+$ such that $a_1 + a_2 = b_1+b_2,$ there are four elements $c_{11},c_{12},c_{21},c_{22}\in G^+$ such that $a_1 = c_{11}+c_{12},$ $a_2= c_{21}+c_{22},$ $b_1= c_{11} + c_{21}$ and $b_2= c_{12}+c_{22}$, and $c_{12}\wedge c_{21}=0.$

\end{enumerate}

If, for $a,b \in G^+,$ we have for all $0\le x \le a$ and $0\le y\le b,$ $x+y=y+x,$ we denote this property by $a\, \mbox{\rm \bf com}\, b.$

The RDP will be denoted by the following table:

$$
\begin{matrix}
a_1  &\vline & c_{11} & c_{12}\\
a_{2} &\vline & c_{21} & c_{22}\\
  \hline     &\vline      &b_{1} & b_{2}
\end{matrix}\ \ .
$$

For Abelian po-groups, RDP, RDP$_1,$ RDP$_0$ and RIP are equivalent.

By \cite[Prop 4.2]{DvVe1} for directed po-groups, we have
$$
\RDP_2 \quad \Rightarrow \RDP_1 \quad \Rightarrow \RDP \quad \Rightarrow \RDP_0 \quad \Leftrightarrow \quad  \RIP,
$$
but the converse implications do not hold, in general.  A directed po-group $G$ satisfies \RDP$_2$ iff $G$ is an $\ell$-group, \cite[Prop 4.2(ii)]{DvVe1}.

RDP's are important for the study of algebraic structures like pseudo effect algebras or pseudo MV-algebras  which are a non-commutative generalization of effect algebras and MV-algebras, respectively.

According to \cite{DvVe1, DvVe2}, we say that a {\it pseudo effect algebra} is  a partial algebra  $E=(E; +, 0, 1)$, where $+$ is a partial binary operation and $0$ and $1$ are constants such that, for all $a, b, c
\in E$, the following properties hold

\begin{enumerate}
\item[(i)] $a+b$ and $(a+b)+c$ exist if and only if $b+c$ and
$a+(b+c)$ exist, and in this case $(a+b)+c = a+(b+c)$;

\item[(ii)]
  there is exactly one $d \in E$ and
exactly one $e \in E$ such that $a+d = e+a = 1$;

\item[(iii)]
 if $a+b$ exists, there are elements $d, e
\in E$ such that $a+b = d+a = b+e$;

\item[(iv)] if $1+a$ or $a+1$ exists, then $a = 0$.
\end{enumerate}

If we define $a \le b$ if and only if there exists an element $c\in
E$ such that $a+c =b,$ then $\le$ is a partial ordering on $E$ such
that $0 \le a \le 1$ for any $a \in E.$ It is possible to show that
$a \le b$ if and only if $b = a+c = d+a$ for some $c,d \in E$. We
write $c = a \minusre b$ and $d = b \minusli a.$ Then

$$ (b \minusli a) + a = a + (a \minusre b) = b,
$$
and we write $a^- = 1 \minusli a$ and $a^\sim = a\minusre 1$ for any
$a \in E.$ Then $a^-+a=1=a+a^\sim$ and $a^{-\sim}=a=a^{\sim-}$ for any $a\in E.$

For basic properties of pseudo effect algebras see \cite{DvVe1,DvVe2}. We note that a pseudo effect algebra is an {\it effect algebra}  iff $+$ is commutative, compare with \cite{FoBe}.

A mapping $h$ from a pseudo effect algebra $E$ into another one $F$ is said to be a {\it homomorphism} if (i) $h(1)=1,$ and (ii) if $a+b$ is defined in $E,$ so is defined $h(a)+h(b)$ and $h(a+b)= h(a)+h(b).$ A homomorphism $h$ is an {\it isomorphism} if $h$ is injective and surjective and also $h^{-1}$ is a homomorphism.

Now let  $G$ be a po-group and fix $u \in G^+.$ If we set $\Gamma(G,u):=[0,u]=\{g \in G: 0 \le g \le u\},$ then $\Gamma_E(G,u)=(\Gamma(G,u); +,0,u)$ is a pseudo effect algebra, where $+$ is the restriction of the group addition $+$ to $[0,u],$ i.e. $a+b$ is defined in $\Gamma(G,u)$ for $a,b \in \Gamma(G,u)$ iff $a+b \in \Gamma(G,u).$ Then $a^-=u-a$ and $a^\sim=-a+u$ for any $a \in \Gamma(G,u).$ A pseudo effect algebra which is isomorphic to some $\Gamma_E(G,u)$ for some po-group $G$ with $u>0$ is said to be an {\it interval pseudo effect algebra}.

We say that some type of the Riesz decomposition properties (i)--(v) holds in a pseudo effect algebra $E$ if $G$ or $G^+$ is changed to $E$.

The importance of RDP$_1$ for pseudo effect algebras was established in \cite{DvVe1, DvVe2}:

\begin{theorem}\label{th:2.2}
For every pseudo effect algebra $E=(E;+,0,1)$ with \RDP$_1,$ there is a unique (up to isomorphism of unital po-groups) unital po-group $(G,u)$ with \RDP$_1$\ such that $(E;+,0,1) \cong \Gamma_E(G,u)=(\Gamma(G,u);+,0,u)$.

In addition, $\Gamma_E$ defines a categorical equivalence between the category of pseudo effect algebras with \RDP$_1$ and the category of unital po-groups with \RDP$_1.$
\end{theorem}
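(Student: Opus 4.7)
The plan is to follow the strategy used by Ravindran for commutative effect algebras with \RDP, with Mundici's Chang-group construction as its MV-algebraic prototype, and adapt both to the non-commutative setting in which \RDP$_1$ replaces \RDP. First I would build the group $G$ as the Grothendieck-type group of a cancellative monoid $M$ constructed from finite sequences of elements of $E$. Concretely, let $W(E)$ be the free monoid under concatenation of finite sequences $(a_1,\ldots,a_n)$ of elements of $E$, and let $\sim$ be the smallest equivalence relation on $W(E)$ compatible with concatenation that satisfies the fusion rule $(\ldots,a,b,\ldots)\sim(\ldots,a+b,\ldots)$ whenever $a+b$ is defined in $E$, together with the refinement rule $(a_1,\ldots,a_m)\sim(b_1,\ldots,b_n)$ whenever there exists an $m\times n$ family $(c_{ij})$ in $E$ with $a_i=c_{i1}+\cdots+c_{in}$ and $b_j=c_{1j}+\cdots+c_{mj}$. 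Set $M:=W(E)/{\sim}$ with neutral element $[()]$ and distinguished element $u:=[(1)]$.

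The central technical step is to prove that $M$ is a cancellative monoid satisfying \RDP$_1$, and that every element is bounded above by some $nu$. I would do this by induction on total word length, at each step using \RDP$_1$ in $E$ to build a joint refinement of two competing decompositions of a common element, and using the commutativity clause ``$0\le x\le c_{12}$, $0\le y\le c_{21}$ imply $x+y=y+x$'' to justify the column permutations that appear when one stacks two such refinements on top of each other. This clause is precisely what is needed to carry transitivity of $\sim$ through in the non-commutative setting, and I expect the bookkeeping of these two-dimensional refinements to be the main obstacle of the proof---the step that distinguishes it from Ravindran's commutative argument and is responsible for the appearance of \RDP$_1$ rather than plain \RDP.

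Once $M$ is established, I would turn it into the positive cone of a group. Every pair of elements of $M$ admits common upper multiples $nu$ because both are dominated by some multiple of $u$, so $M$ is a two-sided Ore monoid and its group of fractions $G$ exists, with $G^+=M$. The pair $(G,u)$ is then a unital po-group, and the map $\varphi:E\to\Gamma_E(G,u)$, $a\mapsto[(a)]$, preserves the partial operation $+$ by construction. Surjectivity comes from refining an arbitrary representative $0\le[(a_1,\ldots,a_n)]\le u$ against the one-letter word $(1)$, which produces a single $b\in E$ with $[(a_1,\ldots,a_n)]=[(b)]$; injectivity of $\varphi$ follows from the same refinement argument in the one-letter case. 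Transferring \RDP$_1$ from $E$ to $G$ is then a direct computation that reduces a given decomposition problem in $G^+$, after translating inside a sufficiently large interval $[0,nu]$, to \RDP$_1$ for $E$.

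Finally, uniqueness and functoriality follow from the universal property built into the construction. Any pseudo effect algebra morphism $h:E\to\Gamma_E(H,v)$ into another unital po-group with \RDP$_1$ extends uniquely to a unital po-group morphism $\hat h:G\to H$ by the rule $[(a_1,\ldots,a_n)]\mapsto h(a_1)+\cdots+h(a_n)$, and \RDP$_1$ of $(H,v)$ guarantees that this is well defined on $\sim$-classes. Applied to a hypothetical second representation $(G',u')$ of $E$, this yields the desired isomorphism $(G,u)\cong(G',u')$. In the reverse direction every unital po-group morphism restricts to a pseudo effect algebra morphism between intervals, and the two assignments $E\mapsto(G,u)$ and $(G,u)\mapsto\Gamma_E(G,u)$ are mutually quasi-inverse, giving the claimed categorical equivalence.
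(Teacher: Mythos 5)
The paper does not prove Theorem \ref{th:2.2}; it quotes it from \cite{DvVe1,DvVe2}, and your outline follows essentially the strategy of those references (with \cite{Rav} as the commutative prototype): a word monoid over $E$ modulo a fusion congruence, a refinement characterization of that congruence obtained from \RDP$_1$, cancellativity and the Ore condition, the group of fractions with the monoid as positive cone, and uniqueness via the universal property. The steps you list for surjectivity and injectivity on $[0,u]$, for the two-sided Ore condition from $x\le nu$, for the transfer of \RDP$_1$, and for uniqueness are all correctly placed.

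There is, however, one genuine gap: you generate $\sim$ by the \emph{unrestricted} refinement rule, i.e.\ by arbitrary matrices $(c_{ij})$ whose row sums are the $a_i$ and whose column sums are the $b_j$, with no commutativity constraint on the entries. This rule is unsound in the non-commutative setting. Take $c_{11}=c_{22}=0$ and $c_{12}=x$, $c_{21}=y$: the rule then identifies the word $(x,y)$ with $(y,x)$ for \emph{arbitrary} $x,y\in E$, so your quotient monoid is forced to be commutative and the map $a\mapsto[(a)]$ cannot be an isomorphism onto $\Gamma_E(G,u)$ for any genuinely non-commutative $(G,u)$. The point is that passing from the row reading $c_{11}+c_{12}+c_{21}+c_{22}$ to the column reading $c_{11}+c_{21}+c_{12}+c_{22}$ requires exactly the commutation $c_{12}+c_{21}=c_{21}+c_{12}$, which is what the extra clause in \RDP$_1$ supplies for the \emph{particular} matrices it produces, but which fails for arbitrary refinement matrices. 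The repair is to generate $\sim$ by the fusion rule alone and to \emph{prove}, as the central lemma, that $\sim$ is characterized by common refinements whose matrices carry the commutativity constraints of \RDP$_1$; that derived, restricted refinement property is what yields transitivity, cancellation, and the refinement of a word against $(1)$. You correctly identify this bookkeeping as the main obstacle, but by postulating the unrestricted rule as a generator you pre-empt it in a way that breaks the construction rather than completes it.
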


The representation theorem for effect algebras with RDP was established in \cite{Rav}. In addition, by \cite{DvVe1,DvVe2}, a pseudo effect algebra has RDP$_2$ iff it is an interval in a unital $\ell$-group.

Another non-commutative structure is a pseudo MV-algebra. According to \cite{GeIo}, a {\it pseudo MV-algebra} is an algebra $ M=(M;
\oplus,^-,^\sim,0,1)$ of type $(2,1,1,$ $0,0)$ such that the
following axioms hold for all $x,y,z \in M$ with an additional
binary operation $\odot$ defined via $$ y \odot x =(x^- \oplus y^-)
^\sim $$
\begin{enumerate}
\item[{\rm (A1)}]  $x \oplus (y \oplus z) = (x \oplus y) \oplus z;$

\item[{\rm (A2)}] $x\oplus 0 = 0 \oplus x = x;$

\item[{\rm (A3)}] $x \oplus 1 = 1 \oplus x = 1;$

\item[{\rm (A4)}] $1^\sim = 0;$ $1^- = 0;$

\item[{\rm (A5)}] $(x^- \oplus y^-)^\sim = (x^\sim \oplus y^\sim)^-;$

\item[{\rm (A6)}] $x \oplus (x^\sim \odot y) = y \oplus (y^\sim
\odot x) = (x \odot y^-) \oplus y = (y \odot x^-) \oplus
x;$

\item[{\rm (A7)}] $x \odot (x^- \oplus y) = (x \oplus y^\sim)
\odot y;$

\item[{\rm (A8)}] $(x^-)^\sim= x.$
\end{enumerate}

A pseudo MV-algebra $M$ is an {\it MV-algebra} iff $a\oplus b = b\oplus a,$ $a,b \in M.$

For example, if $u$ is a strong unit of a (not necessarily Abelian)
$\ell$-group $G$,
$$\Gamma(G,u) := [0,u]
$$
and
\begin{eqnarray*}
x \oplus y &:=&
(x+y) \wedge u,\\
x^- &:=& u - x,\\
x^\sim &:=& -x +u,\\
x\odot y&:= &(x-u+y)\vee 0,
\end{eqnarray*}
then $\Gamma_M(G,u)=(\Gamma(G,u);\oplus, ^-,^\sim,0,u)$ is a pseudo MV-algebra.

The basic result on theory of pseudo MV-algebras \cite{Dvu1} is the following representation theorem.

\begin{theorem}\label{th:2.1}
Every pseudo MV-algebra is an interval $\Gamma(G,u)$ in a unique (up to isomorphism)  unital $\ell$-group $(G,u).$

In addition, the functor $\Gamma_M: (G,u) \mapsto \Gamma_M(G,u)$ defines a categorical equivalence between the variety of pseudo MV-algebras and the category of unital $\ell$-groups.
\end{theorem}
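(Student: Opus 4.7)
The plan is to derive Theorem~\ref{th:2.1} from Theorem~\ref{th:2.2} by matching pseudo MV-algebras with pseudo effect algebras satisfying \RDP$_2$, and, on the group side, using that a directed po-group has \RDP$_2$ iff it is an $\ell$-group. Starting with a pseudo MV-algebra $M$, define a partial sum by $a+b := a\oplus b$ whenever $a \le b^-$ (the ``no overflow'' case); axioms (A1)--(A8) then yield that $(M;+,0,1)$ is a pseudo effect algebra. The lattice operations on $M$ are already visible through (A6), (A7) as $a\vee b = a\oplus(a^\sim\odot b)$ and $a\wedge b = a\odot(a^-\oplus b)$, and employing them to refine any equality $a_1+a_2 = b_1+b_2$ produces the RDP table with the sharpening $c_{12}\wedge c_{21}=0$, so $M$ satisfies \RDP$_2$.

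Invoking Theorem~\ref{th:2.2}, there is a unique unital po-group $(G,u)$ with \RDP$_1$ together with a pseudo effect algebra isomorphism $M\cong\Gamma_E(G,u)$. Next I would upgrade \RDP$_2$ from the interval $[0,u]$ to the whole positive cone: every $g\in G^+$ lies in some $[0,nu]$ because $u$ is a strong unit, and an \RDP$_2$ table in $G^+$ is assembled by patching together \RDP$_2$ tables supplied by translates of $[0,u]$. Hence $G$ has \RDP$_2$, and being directed it must be an $\ell$-group by \cite[Prop 4.2(ii)]{DvVe1}. A direct check verifies that the $\ell$-group formulae $x\oplus y=(x+y)\wedge u$, $x^- = u-x$, $x^\sim = -x+u$ on $\Gamma(G,u)$ recover the original operations of $M$ under the isomorphism, while uniqueness of $(G,u)$ is inherited from Theorem~\ref{th:2.2}.

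For the categorical equivalence, any pseudo MV-algebra homomorphism preserves partial addition and is therefore a pseudo effect algebra morphism, and conversely a pseudo effect algebra morphism between pseudo MV-algebras preserves $\oplus$, $\odot$, and the unary operations (these being definable from partial $+$ together with $\wedge$, $\vee$, which are available thanks to \RDP$_2$). By Theorem~\ref{th:2.2}, such morphisms lift uniquely to unital po-group morphisms, and between unital $\ell$-groups these automatically preserve $\wedge$ and $\vee$ since the lattice structure on $G$ is determined by its positive cone. The main obstacle is the bootstrap step lifting \RDP$_2$ from $[0,u]$ to $G^+$: in the non-commutative setting the four entries of an \RDP$_2$ table interact through non-symmetric addition, and patching tables across translates of the unit interval requires careful bookkeeping that is absent from Mundici's commutative argument.
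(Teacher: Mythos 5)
First, note that the paper offers no proof of Theorem \ref{th:2.1}: it is quoted as a known result from \cite{Dvu1}, so there is no in-paper argument to compare yours against. Judged on its own terms, your reduction strategy --- identify pseudo MV-algebras with pseudo effect algebras satisfying \RDP$_2$, apply Theorem \ref{th:2.2}, then use ``directed $+$ \RDP$_2$ $\Rightarrow$ $\ell$-group'' --- is the standard modern route and is the one the surrounding text of Section 2 gestures at. But the proposal has a genuine gap exactly where you flag ``the main obstacle'': Theorem \ref{th:2.2} only delivers a unital po-group $(G,u)$ with \RDP$_1$, and the whole theorem hinges on upgrading this to \RDP$_2$ for $G$ (equivalently, to $G$ being an $\ell$-group). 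Your ``patching \RDP$_2$ tables across translates of $[0,u]$'' is not an argument; the serious mathematical content of the theorem lives precisely there (in \cite{DvVe2} this is the substance of the results around Theorems 8.3--8.8, where one first shows the unit interval is a lattice and then transfers the lattice structure to $G^{+}$ and to $G$). Acknowledging the obstacle does not discharge it, so as written this is a proof outline with its central step missing.

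Second, the justification of the categorical equivalence contains a false claim: a unital po-group homomorphism between unital $\ell$-groups does \emph{not} automatically preserve $\wedge$ and $\vee$. For instance $(a,b)\mapsto a+b$ from $(\mathbb Z^2,(1,1))$ with the coordinatewise order to $(\mathbb Z,2)$ is a unital po-group morphism, yet it sends $(1,0)\wedge(0,1)=(0,0)$ to $0$ while the images satisfy $1\wedge 1=1$. The correct reason the lifted morphisms are $\ell$-group homomorphisms is that they restrict on $[0,u]$ to pseudo MV-morphisms, which preserve $\oplus$ and $\odot$ and hence the lattice operations on the interval, and this then has to be propagated to all of $G$ --- which again requires the unproven interval-to-group transfer. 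The remaining ingredients (the partial addition $a+b:=a\oplus b$ defined iff $a\odot b=0$, equivalently $a\le b^-$; the lattice terms $a\vee b=a\oplus(a^\sim\odot b)$ and $a\wedge b=a\odot(a^-\oplus b)$; and uniqueness being inherited from Theorem \ref{th:2.2} because every unital $\ell$-group is a directed unital po-group with \RDP$_1$) are correct.
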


An important note is that every pseudo MV-algebra can be studied also in the realm of pseudo effect algebras with RDP$_2$. Indeed, if $M= (M;\oplus, ^-,^\sim,0,1)$ is a pseudo effect algebra, we define a partial operation $+$ as follows $a+b$ is defined in $M$ iff $a\odot b = 0$ and in such a case, $a+b:=a\oplus b$. Then $(M;+,0,1)$ is a pseudo effect algebra with RDP$_2$, see \cite[Thm 8.3, 8.4]{DvVe2}. Conversely, if $(E; +,0,1)$ is a pseudo effect algebra with RDP$_2,$ then $E$ is a lattice, and by \cite[Thm 8.8]{DvVe2}, $(E; \oplus,^-,^\sim,0,1),$ where
$$
a\oplus b := (b^-\minusli (a\wedge b^-))^\sim,\quad a,b\in E,
$$
is a pseudo MV-algebra.

\section{Riesz Decomposition Properties of Lexicographic Product}

In the section, we establish Riesz decomposition properties of the lexicographic product $G_1\lex G_2$ of two po-groups. We concentrate to two cases: (i) $G_1$ is a linearly ordered po-group and we establish each type of RDP, RDP$_1$,  and RDP$_2$, (ii) $G_1$ is an antilattice and we determine only RDP of the lexicographic product.

We note that the lexicographic product $G_1\lex G_2$, where $G_1$ is a po-group and $G_2$ is a directed po-group, has RDP$_2$ (equivalently, it is an $\ell$-group) iff (i) if $G_2$ is non-trivial, then $G_1$ is linearly ordered and $G_2$ satisfies RDP$_2$ (use  \cite[p. 26 (d)]{Fuc}), or (ii) $G_2=\{e\}$ is trivial, then $G_1 \cong G_1 \lex G_2$, and $G_1$ has to satisfy RDP$_2$.

The following result was proved in \cite[Thm 3.2]{DvKo} for the lexicographic product $G_1\lex G_2$, where $G_1$ is a linearly ordered po-group and $G_2$ is a directed Abelian po-group with RDP. In what follows, we prove it for any type of RDP and for non-Abelian po-groups.

\begin{theorem}\label{th:3.1}
Let $G_1$ be a linearly ordered po-group and  $G_2$ be a directed po-group. Then the lexicographic product $G_1 \lex G_2$ satisfies
\RDP\, {\rm (\RDP$_1$, \RDP$_2$)} if and only if $G_2$ satisfies \RDP\, {\rm (\RDP$_1$, \RDP$_2$)}.
\end{theorem}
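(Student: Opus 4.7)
The plan is to prove the biconditional in both directions, with the bulk of the work in the $(\Leftarrow)$ part.

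For $(\Rightarrow)$, I would use the order-embedding $\iota : G_2 \to G_1 \lex G_2$, $x \mapsto (0, x)$, which sends $G_2^+$ into $(G_1 \lex G_2)^+$ and has the property that the order-interval $[(0,0),(0,w)]$ in $G_1 \lex G_2$ coincides with $\{(0, w') : 0 \leq w' \leq w\}$. Given an equation $x_1 + x_2 = y_1 + y_2$ in $G_2^+$, I lift it via $\iota$ and apply the hypothesized decomposition property in $G_1 \lex G_2$; the four refining cells must have vanishing $G_1$-component (they are positive with pairwise sums equal to $0$ in $G_1$), so projecting onto the second coordinate returns a refinement in $G_2$. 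The RDP$_1$ commutativity and RDP$_2$ meet requirements descend because they only involve elements of $\iota(G_2^+)$, and lex meets restricted to this slice coincide with meets in $G_2$.

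For $(\Leftarrow)$, I start from an equation $(a_1, x_1) + (a_2, x_2) = (b_1, y_1) + (b_2, y_2)$ in $(G_1 \lex G_2)^+$, yielding $a_1 + a_2 = b_1 + b_2$ in $G_1$ and $x_1 + x_2 = y_1 + y_2$ in $G_2$, and split by the linear order of $G_1$ into $a_1 < b_1$, $a_1 > b_1$, and $a_1 = b_1$. In the strict case $a_1 < b_1$ an asymmetric decomposition works directly:
$$
(c_{11}, z_{11}) = (a_1, x_1),\ (c_{12}, z_{12}) = (0, 0),
$$
$$
(c_{21}, z_{21}) = (-a_1 + b_1, -x_1 + y_1),\ (c_{22}, z_{22}) = (b_2, y_2).
$$
The four summation identities follow from the hypotheses, positivity is immediate because $c_{21} > 0$ dominates the lex order regardless of $z_{21}$, and the RDP$_1$/RDP$_2$ extras are vacuous since $(c_{12}, z_{12}) = (0, 0)$. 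The case $a_1 > b_1$ is handled symmetrically.

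The main obstacle is the equal case $a_1 = b_1$ (forcing $a_2 = b_2$): in non-Abelian $G_2$ the naive choice $z_{11} = 0$ already fails, since consistency of the resulting four equations on the $z_{ij}$ reduces to $x_1 - y_1 = -y_1 + x_1$, i.e., commutation of $x_1$ and $y_1$. To overcome this I exploit the directedness of $G_2$: I pick $u, v \in G_2$ with $u \geq 0, -x_1, -y_1$ and $v \geq 0, -x_2, -y_2$, further imposing $u = 0$ when $a_1 = 0$ and $v = 0$ when $a_2 = 0$ (allowed because those cases force $x_1, y_1 \geq 0$ or $x_2, y_2 \geq 0$). The shifted equation
$$
(u + x_1) + (x_2 + v) = (u + y_1) + (y_2 + v)
$$
lies in $G_2^+$, so the hypothesized RDP (respectively RDP$_1$, RDP$_2$) in $G_2$ yields $d_{ij} \in G_2^+$, and I define $(c_{11}, z_{11}) = (a_1, -u + d_{11})$, $(c_{12}, z_{12}) = (0, d_{12})$, $(c_{21}, z_{21}) = (0, d_{21})$, $(c_{22}, z_{22}) = (a_2, d_{22} - v)$. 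The $u$ and $v$ shifts cancel in the four summation identities; positivity at the corner cells holds because either $a_i > 0$ dominates the first coordinate or the corresponding shift is $0$, making $z_{11} = d_{11} \geq 0$ or $z_{22} = d_{22} \geq 0$; and the RDP$_1$/RDP$_2$ clauses for the middle cells $(0, d_{12})$ and $(0, d_{21})$ reduce, because their $G_1$-coordinates both vanish, to $d_{12}\, \com\, d_{21}$ or $d_{12} \wedge d_{21} = 0$ in $G_2$, which is exactly what the hypothesis furnishes.
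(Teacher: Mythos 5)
Your proof is correct and follows essentially the same strategy as the paper's: a triangular decomposition with a zero cell when the $G_1$-coordinates $a_1$ and $b_1$ differ strictly (so lex dominance makes positivity and the RDP$_1$/RDP$_2$ side conditions automatic), and a directedness-based translation into $G_2^+$ followed by the hypothesis on $G_2$ when they coincide. The only difference is organizational: your trichotomy on $a_1$ versus $b_1$ condenses the paper's nine cases (i)--(ix) into three, with the conventions $u=0$ when $a_1=0$ and $v=0$ when $a_2=0$ absorbing the paper's separate zero-coordinate cases.
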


\begin{proof}
Let $G_1\lex G_2$ satisfy some type of RDP and let $a_1a_2=b_1b_2$. Then from $(0,a_1)+(0,a_2)=(0,b_1)+(0,b_2)$ we have that $G_2$ has the same type of RDP's.

Now we assume the converse.
If $G_2$ is a trivial group, i.e., $G_2=\{e\}$, then $G_1\lex G_2=G_1\lex \{e\}\cong G_1$ which implies that $G_1\lex G_2$ is linearly ordered, so that it satisfies all RDP, RDP$_1$ and RDP$_2$.

Let us assume that $G$ is non-trivial.
First we prove the case for RDP$_1$, the case for RDP is analogous. We will assume that $G_1=(G_1;+,-,0,\le)$ is written in the additive way and $G_2=(G_2;\cdot,^{-1},e,\le)$ in the multiplicative way, respectively, and the group operation in $G_1\lex G_2$ is assumed in the additive way.

Let $G_2$ be a directed po-group with RDP$_1$.
The positive cone  $(G_1 \lex G_2)^+$ is of the form
$\{(0,a): a \in G_2^+\} \cup \{(n,a): n \in G_1^+\setminus \{0\},\ a \in G_2\}.$
Assume that
$$
(m_1,a_1)+(m_2,a_2) = (n_1,b_1) + (n_2,b_2)$$
holds in $(G_1 \lex G_2)^+.$

(i) Let $(0,a_1)+(0,a_2) = (0,b_1)+ (0,b_2).$  Then $a_1,a_2,b_1,b_2 \in G_2^+$ and RDP$_1$ for this case  follows from RDP$_1$ for $G_2.$

(ii) $(0,a_1) + (n,a_2 )=
(0,  b_1 ) + (n,  b_2 )$ for $a_1,b_1 \ge e$, $a_2,b_2 \in G_2$ for each  $n \in G_1^+\setminus\{0\}$. Then $a_1a_2=b_1b_2$. While $G_2$ is directed,  there is an element $d \in G_2$ such that $a_2,b_2 \ge d$. Then $a_1a_2d^{-1}=b_1b_2d^{-1}$ and for them we have the RDP$_1$ decomposition

$$
\begin{matrix}
a_1 &\vline & c_{11} & c_{12}\\
a_2d^{-1} &\vline & c_{21} & c_{22}\\
  \hline     &\vline      &b_1 & b_2d^{-1}
\end{matrix}\ \ ,
$$
where $c_{12}\, \mbox{\rm \bf com}\, c_{21}$. Then

$$
\begin{matrix}
a_1  &\vline & c_{11} & c_{12}\\
a_2 &\vline & c_{21} & c_{22}d\\
  \hline     &\vline      &b_1 & b_2
\end{matrix}\ \
$$
and
$$
\begin{matrix}
(0, a_1)  &\vline & (0, c_{11}) & (0, c_{12})\\
(n, a_2) &\vline & (0, c_{21}) & (n, c_{22}d)\\
  \hline     &\vline      &(0, b_1) & (n, b_2)
\end{matrix}\ \
$$
is an RDP$_1$ decomposition for (ii) in the po-group $G_1\lex G_2$.

(iii) $(n,   a_1 ) + (0,  a_2 )=
(n,  b_1 ) + (0,  b_2 )$ for $a_2,b_2 \ge e$, $a_1,b_1 \in G_2$ for  $n \in G_1^+\setminus \{0\}$. The directness of $G_2$ implies, there is $d \in G_2$ such that $d\le a_1,a_2,b_1,b_2$. Equality (iii) can be rewritten in the equivalent form $(n,   d^{-1}a_1 ) + (0,  a_2d^{-1} )=
(n,  d^{-1}b_1 ) + (0,  b_2d^{-1} )$ which yields $d^{-1}a_1a_2d^{-1}=d^{-1}b_1b_2d^{-1}$.
It entails an RDP$_1$ decomposition in the po-group $G_2$

$$
\begin{matrix}
d^{-1}a_1 &\vline & c_{11} & c_{12}\\
a_2d^{-1} &\vline & c_{21} & c_{22}\\
  \hline     &\vline      &d^{-1}b_1 & b_2
d^{-1}\end{matrix}\ \ ,
$$
consequently,
$$
\begin{matrix}
a_1  &\vline & dc_{11} & c_{12}\\
a_2 &\vline & c_{21} & c_{22}d\\
  \hline     &\vline      &b_1 & b_2
\end{matrix}\ \ ,
$$
and it gives an RDP$_1$ decomposition of (iii) in the po-group $G_1\lex G_2$

$$
\begin{matrix}
(n, a_1)  &\vline & (n, dc_{11}) & (0, c_{12})\\
(0, b_1) &\vline & (0, c_{21}) & (0, c_{22}d)\\
  \hline     &\vline      &(n, b_1) & (0, b_2 )
\end{matrix}\ \ .
$$

(iv) $(n,   a_1 ) + (0,  a_2 )=
(0,  b_1) + (n,  b_2 )$ for $a_1,b_2\in G_2 $, $a_2,b_1 \ge e$ for  $n \in G_1^+\setminus\{0\}$.

Then $a_1a_2 = b_1b_2$  which implies $b_1^{-1}a_1= b_2 a_2^{-1}$.   If we use the decomposition

$$
\begin{matrix}
(n, a_1)  &\vline & (0, b_1) & (n, b_1^{-1}a_1)\\
(0, a_2) &\vline & (0,e) & (0, a_2)\\
  \hline     &\vline      &(0, b_1) & (n, b_2 )
\end{matrix}\ \ ,
$$
we see that it gives an RDP$_1$ decomposition for (iv); trivially  $(0,e)\, \mbox{\rm \bf com}\, (n, b_1^{-1}a_1)$.

(v) $(n,   a_1) + (0,  a_2 )=
(m_1,  b_1 ) + (m_2,  b_2 )$ for $a_1,b_1, b_2\in G_2 $, $a_2\ge e$, where $m_1,m_2\in G_1^+\setminus\{0\}$ and $m_1+m_2=n$. Then $a_1a_2=b_1b_2$. Hence, the following table gives an RDP$_1$ decomposition for (v)

$$
\begin{matrix}
(n, a_1)  &\vline & (m_1, b_1) & (m_2, b^{-1}_1a_1)\\
(0, a_2) &\vline & (0,e) & (0, a_2)\\
  \hline     &\vline      &(m_1, b_1) & (m_2, b_2)
\end{matrix}\ \ .
$$

(vi) $(0,   a_1) + (n,  a_2)=
(m_1,  b_1) + (m_2,  b_2 )$ for $a_2,b_1,b_2\in G_2 $, $a_1\ge e$, where $m_1,m_2\in G_1^+\setminus\{0\}$ and $m_1+m_2=n$. Then we have $b_1b_2=a_1a_2$ and  the following RDP$_1$ decomposition

$$
\begin{matrix}
(0, a_1)  &\vline & (0, a_1) & (0,e)\\
(n, a_2) &\vline & (m_1, a_1^{-1}b_1) & (m_2, b_2)\\
  \hline     &\vline      &(m_1, b_1) & (m_2, b_2)
\end{matrix}\ \ .
$$

(vii) $(n_1,   a_1 ) + (n_2,  a_2 )=
(m_1,  b_1) + (m_2,  b_2 )$ for $a_1,a_2,b_1,b_2\in G_2$, where $n_1, n_2,$ $ m_1,m_2 \in G_1^+\setminus\{0\}$, $n_1+n_2=n=m_1+m_2$ and $m_1> n_1$. Then $a_1a_2=b_1b_2$, and since $-n_1+n=n_2$, (vii)  has the following RDP$_1$ decomposition

$$
\begin{matrix}
(m_1, b_1)  &\vline & (n_1, a_1) & (-n_1+m_1, a_1^{-1}b_1)\\
(m_2, b_2 ) &\vline & (0,e) & (m_2, b_2)\\
  \hline     &\vline      &(n_1, a_1) & (n_2, a_2)
\end{matrix}\ \ \mbox{ if } m_1 >n_1
$$
is an RDP$_1$ decomposition.

(viii) $(n_1,   a_1) + (n_2,  a_2 )=
(m_1,  b_1 ) + (m_2,  b_2 )$ for $a_1,a_2,b_1,b_2 \in G_2 $,  where $n_1, n_2, m_1,m_2 \in G_1^+\setminus\{0\}$, $n_1+n_2=n=m_1+m_2$ and $n_1> m_1$. Then $a_1a_2=b_1b_2$. Hence, the following table

$$
\begin{matrix}
(n_1, a_1)  &\vline & (m_1, b_1) & (-m_1+n_1, b_1^{-1}a_1)\\
(n_2, a_2) &\vline & (0,e) & (n_2, a_2)\\
  \hline     &\vline      &(m_1, b_1) & (m_2, b_2)
\end{matrix}\ \ \mbox{ if } n_1 >m_1
$$
gives an RDP$_1$ decomposition for (viii). By a way, (viii) follows also from (vii) when we rewrite (viii) in the equivalent form $(m_1,  b_1 ) + (m_2,  b_2 )=(n_1,a_1) + (n_2,a_2 )$.

(ix) $(n_1,   a_1 ) + (n_2,  a_2)=
(m_1,  b_1) + (m_2,  b_2 )$ for $a_1,a_2,b_1,b_2\in G_2 $, where $n_1, n_2, m_1,$ $ m_2 \in G_1^+\setminus\{0\}$, $n_1+n_2=n=m_1+m_2$ and $n_1= m_1$.  Then $a_1a_2= b_1b_2$. The directness of $G_2$ entails that there is $d\in G_2$ such that $a_1,a_2,b_1,b_2\ge d$. Hence, $d^{-1}a_1a_2d^{-1}= d^{-1}b_1b_2d^{-1}$. The RDP$_1$ holding in $G_2$ entails the following RDP$_1$ table

$$
\begin{matrix}
d^{-1}a_1  &\vline & c_{11} & c_{12}\\
a_2d^{-1} &\vline & c_{21} & c_{22}\\
  \hline     &\vline      &d^{-1}b_1 & b_2d^{-1}
\end{matrix}\ \ ,
$$
so that

$$
\begin{matrix}
a_1  &\vline & dc_{11} & c_{12}\\
a_2 &\vline & c_{21} & c_{22}d\\
  \hline     &\vline      &b_1 & b_2
\end{matrix}\ \ .
$$

It gives an RDP$_1$ decomposition of (ix)

$$
\begin{matrix}
(n_1, a_1 )  &\vline & (n_1, dc_{11}) & (0, c_{12} )\\
(n_2,  a_2) &\vline & (0, c_{21}) & (n_2, c_{22}d)\\
  \hline     &\vline      &(n_1, b_1)& (n_2, b_2 )
\end{matrix}\ \ .
$$

Now let $G_2$ satisfy RDP$_2$.  By \cite[Prop 4.2(ii)]{DvVe1}, a directed po-group $G_2$ satisfies \RDP$_2$ iff $G_2$ is an $\ell$-group. Then by \cite[p. 26 (d)]{Fuc}, the lexicographic product is an $\ell$-group, so that again by \cite[Prop 4.2(ii)]{DvVe1}, the lexicographic product $G_1 \lex G_2$ satisfies RDP$_2$.

Another proof of this implication follows all previous steps (i)--(ix)  for RDP$_2$ assumptions which prove that the po-group $G_1 \lex G_2$ has RDP$_2$.
\end{proof}

\begin{remark}\label{re:3.2}
{\rm We note that in \cite[Cor 2.12]{Goo}, there are presented conditions when the lexicographic product of two Abelian po-groups has the interpolation property, which means when the lexicographic product satisfies RDP, equivalently, RDP$_1$. By Theorem \ref{th:3.1}, we know conditions for non-Abelian po-groups only if $G_1$ is linearly ordered, and the proof of cases (vii)--(ix) strongly used this fact. Therefore, we suggest to find a proof of Theorem \ref{th:3.1} without the assumption that $G_1$ is linearly ordered, simply assuming that $G_1$ satisfies RDP or RDP$_1$, respectively, or to find a counterexample.  We underline that the proof of the validity of RDP$_2$ has assumed $G_1$ is linearly ordered as it follows from the last line of the proof of the latter theorem.}
\end{remark}

A partial answer to the problem from Remark \ref{re:3.2} is the following result concerning RDP.

\begin{theorem}\label{th:3.3}
Let $G_1$ be an antilattice po-group and $G_2$ be a directed po-group. Then $G_1\lex G_2$ satisfies \RDP\, if and only if both $G_1$ and $G_2$ satisfy \RDP.
\end{theorem}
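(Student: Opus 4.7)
My plan is to emulate the case-by-case proof of Theorem \ref{th:3.1}, with the antilattice condition on $G_1$ substituting for the linear order of $G_1$ used there. Necessity is routine: if $G_1\lex G_2$ has RDP and $a_1+a_2=b_1+b_2$ holds in $G_2^+$, I lift to $(0,a_1)+(0,a_2)=(0,b_1)+(0,b_2)$ and apply RDP in $G_1\lex G_2$. Since every entry $(s_{ij},t_{ij})$ of the refinement lies in the positive cone and the first-coordinate row and column sums vanish, antisymmetry of the $G_1$-order forces each $s_{ij}=0$, and the $t_{ij}$'s give the refinement in $G_2^+$. Symmetrically, lifting $m_1+m_2=n_1+n_2$ to $(m_1,e)+(m_2,e)=(n_1,e)+(n_2,e)$ and projecting the refinement to first coordinates yields RDP in $G_1^+$.

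For sufficiency, fix $(m_1,a_1)+(m_2,a_2)=(n_1,b_1)+(n_2,b_2)$ in $(G_1\lex G_2)^+$, so that $m_1+m_2=n_1+n_2$ in $G_1^+$ and $a_1 a_2=b_1 b_2$ in $G_2$ (using the paper's multiplicative convention on $G_2$). Split into cases by the pattern of zeros among $m_1,m_2,n_1,n_2$. When at least one of these is zero, the constructions of cases (i)--(vi) in the proof of Theorem \ref{th:3.1} transfer verbatim, since those arguments use only RDP and directedness of $G_2$ and do not rely on linearity of $G_1$.

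The key new case is $m_1,m_2,n_1,n_2>0$. Apply RDP in $G_1$ to $m_1+m_2=n_1+n_2$ to get a refinement $(r_{ij})\in G_1^+$, and then invoke the antilattice hypothesis to arrange $r_{12}=0$ or $r_{21}=0$; this replaces the trichotomy between $m_1$ and $n_1$ used in cases (vii)--(ix) of Theorem \ref{th:3.1}. Assuming WLOG $r_{21}=0$, so that $r_{11}=n_1$, $r_{12}=-n_1+m_1\in G_1^+$, and $r_{22}=m_2$, the RDP refinement in $G_1\lex G_2$ is given, in the spirit of case (vii) of Theorem \ref{th:3.1}, by
\[
\begin{matrix}
(m_1,a_1) &\vline & (n_1,b_1) & (-n_1+m_1,\,b_1^{-1}a_1)\\
(m_2,a_2) &\vline & (0,e) & (m_2,a_2)\\
\hline &\vline & (n_1,b_1) & (n_2,b_2)
\end{matrix}
\]
all of whose entries lie in $(G_1\lex G_2)^+$ provided $-n_1+m_1>0$. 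In the degenerate subcase $-n_1+m_1=0$ (equivalently, both $r_{12}$ and $r_{21}$ vanish), I would follow case (ix) of Theorem \ref{th:3.1}: pick a common lower bound $d\in G_2$ of $a_1,a_2,b_1,b_2$ by directedness of $G_2$, apply RDP in $G_2$ to the now-positive equation $d^{-1}a_1\cdot a_2 d^{-1}=d^{-1}b_1\cdot b_2 d^{-1}$, and lift the resulting refinement back.

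The principal obstacle is to justify, from the antilattice plus RDP hypotheses on $G_1$ alone, that the refinement $(r_{ij})$ can always be chosen with $r_{12}=0$ or $r_{21}=0$. My intended argument is that any RDP refinement can be improved so that $r_{12}\wedge r_{21}=0$ by absorbing any positive common lower bound of $r_{12}$ and $r_{21}$ into $r_{11}$ (or $r_{22}$) through a further application of RDP in $G_1$; the antilattice condition (meets exist only between comparable elements) then forces $r_{12}=0$ or $r_{21}=0$. Making this extremal-refinement construction fully rigorous in a non-Abelian directed po-group, where arbitrary meets need not exist in the first place, is the main technical subtlety of the proof.
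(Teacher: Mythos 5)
Your necessity argument and the observation that cases (i)--(vi) of Theorem \ref{th:3.1} carry over unchanged are both fine and agree with the paper. The problem is the one case that is genuinely new, namely $m_1,m_2,n_1,n_2>0$ with $m_1$ and $n_1$ incomparable, and there your plan cannot be repaired in the form you propose. In any RDP refinement $(r_{ij})$ of $m_1+m_2=n_1+n_2$ one has $m_1=r_{11}+r_{12}$ and $n_1=r_{11}+r_{21}$, so $r_{12}=0$ forces $m_1\le n_1$ and $r_{21}=0$ forces $n_1\le m_1$. Hence a refinement with a vanishing off-diagonal entry exists \emph{only} when $m_1$ and $n_1$ are comparable --- precisely the situation already covered by cases (vii)--(ix) of Theorem \ref{th:3.1}. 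In the incomparable case every refinement has $r_{12},r_{21}>0$, and by the antilattice characterization ($a\wedge b=0$ implies $a=0$ or $b=0$) one even has $r_{12}\wedge r_{21}\ne 0$ whenever the meet exists; so the extremal refinement with $r_{12}\wedge r_{21}=0$ that you hope to reach simply does not exist. (A concrete witness: $G_1=\mathbb R^2$ with strict ordering is a directed antilattice po-group with RDP, and $(1,2)+(2,1)=(2,1)+(1,2)$ admits no refinement with a zero off-diagonal entry.)

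The paper resolves this case by going in the opposite direction: instead of trying to kill an off-diagonal entry, it makes \emph{all four} first-coordinate entries strictly positive, after which the second coordinates are unconstrained by lexicographic positivity. Concretely, since $n_1,m_1>0$ are incomparable they have no greatest lower bound, and $0$ being a lower bound is therefore not the greatest one; this yields $n_0$ with $0<n_0<n_1,m_1$, and dually $m_0$ with $0<m_0<n_2,m_2$. Applying RDP in $G_1$ to $(-n_0+n_1)+(n_2-m_0)=(-n_0+m_1)+(m_2-m_0)$ and then adding $n_0$ and $m_0$ back into the corner entries gives a refinement of $n_1+n_2=m_1+m_2$ whose corners are strictly positive; the off-diagonal entries are strictly positive automatically, because a vanishing one would force $n_1$ and $m_1$ to be comparable. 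One then takes any RDP refinement of $d^{-1}a_1a_2d^{-1}=d^{-1}b_1b_2d^{-1}$ in $G_2$ (with $d$ a common lower bound of $a_1,a_2,b_1,b_2$) and pairs it with this all-positive $G_1$-table. You would need to replace your "extremal refinement" step with an argument of this kind for the proof to go through.
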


\begin{proof}
The implication ``if $G_1\lex G_2$ has RDP, then both $G_1$ and $G_2$ have RDP" follows from the beginning of the proof od Theorem \ref{th:3.1}.

The converse implication. We will assume that $(G_1;+,-,0,\le)$ is written in an additive way, $(G_2;\cdot,^{-1},e,\le)$ is in a multiplicative way and the lexicographic product $G_1 \lex G_2$ is written in an additive way.

The steps (i)--(vi) from the proof of Theorem \ref{th:3.1} remain.
It is necessary to exhibit the case $(n_1,   a_1 ) + (n_2,  a_2 )=
(m_1,  b_1) + (m_2,  b_2 )$ for $a_1,a_2,b_1,b_2\in G_2$, where $n_1, n_2,$ $ m_1,m_2 \in G_1^+\setminus\{0\}$, $n_1+n_2=n=m_1+m_2$. Then $a_1a_2=b_1b_2$, and $-n_1+n=n_2$.

If $n_1$ and $m_2$ are comparable, we use one of the cases (vii)--(ix) of Theorem \ref{th:3.1} and we are ready. Thus we suppose that $n_1$ and $m_1$ are not comparable, consequently, $n_2$ and $m_2$ are also not comparable. Since there exists no greatest lower bound of $n_1$ and $m_1$, there is $n_0\in G_1$ such that $0<n_0<n_1,m_1$. Due to the same reason, there is $m_0\in G_1$ such that $0<m_0<n_2,m_2$. Hence, we have $(-n_0+n_1)+(n_2-m_0) = (-n_0+m_1)+(m_2-m_0)$ where all elements in brackets are strictly positive. Using RDP for $G_1$, we have the decomposition

$$
\begin{matrix}
-n_0+n_1  &\vline & n_{11} & n_{12}\\
n_2-m_0 &\vline & n_{21} & n_{22}\\
  \hline     &\vline      &-n_0+m_1 & m_2-m_0
\end{matrix}\ \
$$
which yields

$$
\begin{matrix}
n_1  &\vline & n_0+n_{11} & n_{12}\\
n_2 &\vline & n_{21} & n_{22}+m_0\\
  \hline     &\vline      &m_1 & m_2-m_0
\end{matrix}\ \ ,
$$
where the elements in the upper left-side corner and in the lower right-side corner are strictly positive.

Therefore, we can always find an RDP decomposition table

$$
\begin{matrix}
n_1  &\vline & n_{11} & n_{12}\\
n_2 &\vline & n_{21} & n_{22}\\
  \hline     &\vline      &m_1 & m_2
\end{matrix}\ \ ,
$$
where $n_{11}>0$ and $n_{22}>0$. In addition, if $n_{12}=0$, then $n_1=n_{11}\le m_1$ which is impossible, and also $n_{12}>0$. Similarly, $n_{21}>0$.

From the directness of $G_2$, there is $d\le a_1,a_2,b_1,b_2$, such that from $d^{-1}a_1a_2d^{-1}=d^{-1}b_1b_2d^{-1}$ we have the RDP table in $G_2$

$$
\begin{matrix}
a_1  &\vline & dc_{11} & c_{12}\\
a_2 &\vline & c_{21} & c_{22}d\\
  \hline     &\vline      &b_1 & b_2
\end{matrix}\ \ .
$$

Therefore, we have a final RDP-decomposition for our case

$$
\begin{matrix}
(n_1,a_1)  &\vline & (n_{11},dc_{11}) & (n_{12},c_{12})\\
(n_2,a_2) &\vline & (n_{21},c_{21}) & (n_{22},c_{22}d) \\
  \hline     &\vline      &(m_1,b_1) & (m_2,b_2)
\end{matrix}\ \ .
$$
\end{proof}

\section{Wreath Product and RDP's}

In the section we establish Riesz decomposition properties of the wreath product. Such a kind of product was used for study of $n$-perfect kite pseudo effect algebras in  \cite{BoDv}.

Now let $(A;\cdot,^{-1},0,\le)$ be a linearly ordered po-group and $(G;\cdot,^{-1},e,\le)$ be a po-group. Let $W=A \lsemiw G^A$ be the semidirect product, i.e., the elements of $W$ are of the form $(n,\langle g_a:a \in A\rangle)$, where $n\in A$, $g_a \in G$ for each $a \in A$. The multiplication $*$ on $W$ is defined as follows

$$(n,\langle g_a: a \in A\rangle) * (m,\langle h_a: a \in A\rangle):=
(nm, \langle g_ah_{an}: a \in A\rangle),\eqno(4.1)
$$
with the neutral element $(0,e^A)$ where $e^A=\langle e_a: e_a=e, a\in A\rangle$, the inverse of an element $(n,\langle g_a: a \in A\rangle)$ is the element
$$ (n,\langle g_a: a \in A\rangle)^{-1}:=(n^{-1}, \langle g^{-1}_{an^{-1}}: a \in A\rangle),
$$
and the ordering is defined by $(n,\langle g_a: a \in A\rangle)\le (m,\langle h_a: a \in A\rangle)$ iff $n<m$ or $n=m$ and $g_a\le h_a$ for each $a \in A$.
Then $(W; *,^{-1},(0,e^A),\le)$ is a po-group called the ({\it unrestricted}) {\it Wreath product} of $A$ and $G$; we write also $W=A \mbox{ Wr }G$, see \cite[Ex 1.3.27]{Gla}. If $G$ is an $\ell$-group, so is the Wreath product. If $G$ is trivial, then the Wreath product is isomorphic to $A$.

The subgroup of all elements $(n,\langle g_a: a \in A\rangle)$, where $g_a = e$ for all but a finite number of $a \in A$, is said to be the {\it restricted} {\it Wreath product} of $A$ and $G$; we write $A \mbox { wr } G$. It is a po-group. If $G$ is a trivial po-group, then the restricted Wreath product is isomorphic to $A$.

Besides these restricted Wreath products we introduce according to \cite[Ex 1.3.28]{Gla} two its special kinds: The subgroup of $W=\mathbb Z \ltimes G^\mathbb Z$ consisting of all elements of the form $(n,\langle g_i: i \in \mathbb Z\rangle)$, where $g_i=e$ for all but finitely many $i \in \mathbb Z$, can be ordered in two ways: an element $(n,\langle g_i: i\in \mathbb Z\rangle) \in W$ is positive if either $n>0$ or $n=0$ and $g_j>e$ where $j$ is the greatest integer $i$ such that $g_i\ne e$. We call $W$ the {\it right wreath product} of $\mathbb Z$ and $G$, and we write$ W=\mathbb Z\, \overrightarrow{\mbox{wr}}\, G$.

The second ordering of $W$ is as follows: An element $(n,\langle g_i: i\in \mathbb Z\rangle)\in W$ is positive iff either $n>0$ or $n=0$ and $g_j>e$ where $j$ is the least integer $i$ such that $g_i\ne e$. We call $W$ the {\it left wreath product} of $\mathbb Z$, and we write $W=\mathbb Z\, \overleftarrow{\mbox{wr}}\, G$.

Both products are po-groups, and if $G$ is a linearly ordered group so are both ones. We note that according to \cite[Cor 61.17]{Dar}, in such a case both products generate different varieties of $\ell$-groups. If, in addition, $G$ is a trivial po-group, then both wreath products are isomorphic to the po-group $\mathbb Z$.

Similarly as for the lexicographic product we exhibit when the Wreath product has RDP or RDP$_1$. We note that for its proof we use the methods inspired by the proof of Theorem \ref{th:3.1}.

\begin{theorem}\label{th:4.1}
Let $A$ be a linearly ordered po-group and $G$ be a directed po-group. Then the Wreath product $A\, \mbox {\rm Wr } G$ satisfies \RDP\, (\RDP$_1$, \RDP$_2$, respectively) if and only if $G$ satisfies \RDP\, (\RDP$_1$, \RDP$_2$, respectively).
\end{theorem}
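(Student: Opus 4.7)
The plan is to mimic the nine-case analysis of Theorem~\ref{th:3.1}, with the twisted multiplication of the wreath product as the only new ingredient. The ``only if'' direction is immediate: any equation $a_1a_2=b_1b_2$ in $G^+$ lifts to the same equation supported on a single coordinate of the subgroup $\{0\}\times G^A\subseteq W$, on which multiplication is coordinatewise; hence any RDP, RDP$_1$ or RDP$_2$ decomposition in $W$ restricts to a decomposition of the same type in $G$.

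For the converse, suppose $G$ has the required property and let $\alpha_1*\alpha_2=\beta_1*\beta_2$ in $W^+$ with $\alpha_k=(n_k,\langle a_{k,i}\rangle)$ and $\beta_k=(m_k,\langle b_{k,i}\rangle)$. Comparing first coordinates gives $n_1n_2=m_1m_2$ in $A$; comparing the remaining $i$-th coordinates gives $a_{1,i}a_{2,in_1}=b_{1,i}b_{2,im_1}$ in $G$. Because $A$ is linearly ordered, each of $n_1,n_2,m_1,m_2$ is either $0$ or strictly positive, and $n_1,m_1$ are comparable; this sorts the situation into exactly the nine cases of the proof of Theorem~\ref{th:3.1}. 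In each case, one first decomposes the first coordinates in $A$ (a chain, hence with RDP$_2$) into $p_{11},p_{12},p_{21},p_{22}\in A^+$ with the correct row and column products. Then, for each index $i\in A$, one uses directedness of $G$ to absorb a lower bound $d_i$ of the four group elements appearing at coordinate $i$ and applies RDP (or RDP$_1$) in $G$ to the resulting local equation between positive elements. The four values produced at $i$ are placed at coordinates $i$, $ip_{11}$, $ip_{11}$ and $ip_{11}p_{21}=im_1$ of the four tuples $C_{11},C_{12},C_{21},C_{22}$, respectively. Since $i$ sweeps $A$ and right translation by any element is a bijection, every coordinate of every tuple is reached exactly once; a direct computation with $(n,\langle g_a\rangle)*(m,\langle h_a\rangle)=(nm,\langle g_ah_{an}\rangle)$ shows that the assembled elements $\gamma_{kl}=(p_{kl},C_{kl})$ form an RDP table for $\alpha_1*\alpha_2=\beta_1*\beta_2$.

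The extra content for RDP$_1$ is the commutativity of $\gamma_{12}$ with $\gamma_{21}$. In each case above either one of the two off-diagonal entries is the identity $(0,e^A)$ (cases (iv)--(viii)), so commutativity is automatic, or both off-diagonals lie in $\{0\}\times G^A$ (cases (i)--(iii) and (ix)), where multiplication is coordinatewise and the RDP$_1$ commutativity reduces at each coordinate to the one already guaranteed by RDP$_1$ in $G$. The RDP$_2$ claim is immediate: if $G$ satisfies RDP$_2$ then $G$ is a directed $\ell$-group by \cite[Prop~4.2(ii)]{DvVe1}, so $A\,\mbox{Wr}\,G$ is an $\ell$-group by \cite[Ex~1.3.27]{Gla}, and therefore satisfies RDP$_2$ by the same reference. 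The principal obstacle is precisely the index-shift bookkeeping described above: one must verify that independent coordinatewise RDP decompositions in $G$, threaded through a single first-coordinate decomposition in $A$, reassemble into four globally consistent tuples whose twisted-product factorisations reproduce $\alpha_1,\alpha_2,\beta_1,\beta_2$ on the nose.
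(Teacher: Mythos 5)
Your overall strategy --- sorting into the nine first-coordinate configurations of Theorem~\ref{th:3.1}, proving the ``only if'' direction by supporting the equation on a single coordinate of the subgroup $\{0\}\times G^A$, and disposing of RDP$_2$ via the $\ell$-group characterization --- is exactly the paper's, and those parts are sound. The gap is in your uniform assembly rule. You claim that after decomposing the first coordinates into $p_{11},p_{12},p_{21},p_{22}$ one can decompose each local equation $a_{1,i}a_{2,in_1}=b_{1,i}b_{2,im_1}$ in $G$ and place the four outputs at coordinates $i$, $ip_{11}$, $ip_{11}$, $ip_{11}p_{21}$, and that ``a direct computation'' then verifies the four product identities. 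It does not: with your placement, $\gamma_{21}*\gamma_{22}$ has $a$-th coordinate $C_{21,a}C_{22,ap_{21}}=c^{21}_{ap_{11}^{-1}}c^{22}_{ap_{11}^{-1}}=a_{2,ap_{11}^{-1}n_1}$, which equals the required $a_{2,a}$ only if $p_{11}=n_1$, i.e.\ $p_{12}=e$; the column identity for $\beta_2$ similarly forces $p_{21}=e$. Hence the coordinatewise assembly reassembles correctly precisely in the ``aligned'' cases (i)--(iii) and (ix), where $n_1=m_1$; in cases (iv)--(viii) the tuple $C_{22}$ is determined twice with incompatible values and the scheme breaks down.

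The paper does something different in those mismatched cases: since $A$ is linearly ordered and the first coordinates of, say, $\beta_1$ and $\alpha_1$ differ, one of the two is strictly below the other in $W$ irrespective of the $G$-entries, and one takes the group-level trivial table $c_{11}=\beta_1$, $c_{12}=\beta_1^{-1}*\alpha_1$, $c_{21}=(0,e^A)$, $c_{22}=\alpha_2$ (or its mirror image), which uses no RDP in $G$ at all and whose positivity is exactly the strict first-coordinate comparison. You do assert that one off-diagonal entry is $(0,e^A)$ in cases (iv)--(viii), but you present this as an outcome of your coordinatewise procedure, which it is not (coordinatewise $G$-decompositions give no reason for an entire tuple to be $e^A$), and you never argue the positivity of the remaining entry $\beta_1^{-1}*\alpha_1$. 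To repair the proof: drop the uniform placement rule, run the coordinatewise argument (with the lower bounds $d_a$ absorbed on the correctly shifted indices, as the local equation at $i$ involves coordinates $i$, $in_1$ and $im_1$ of different tuples) only in cases (i)--(iii) and (ix), and use the trivial decompositions elsewhere. Your RDP$_1$ commutativity discussion is then correct as stated.
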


\begin{proof}
If $G$ is trivial, then $A\lsemiw G^A \cong A$, and it satisfies RDP, RDP$_1$ and RDP$_2$, because $A$ is linearly ordered.

Thus let $G$ be non-trivial.
The positive cone of the Wreath product $A\lsemiw G^A$ is the set $\{(e,\langle g_a: a \in A\rangle): g_a \ge e$ for each $a\in A\}\cup \bigcup_{n>1} \{(n, \langle g_a: a \in A\rangle), \ g_a \in G \mbox{ for each } a \in A\}$.

In what follows, we will deal only with the case RDP$_1$; the case of RDP is similar.

Let the Wreath product $A\lsemiw G^A$ satisfy RDP$_1$ and assume that $g_1g_2=h_1h_2$ for $g_1,g_2,h_1,h_2 \in G^+$. Fix an element $a_0\in A$ and for $j=1,2,$ let us define $A_j=(0,\langle f_a^j \colon a\in A\rangle)$ by $f_a^j=a_j$ if $a=a_0$ and $f_a^j=e$ otherwise, $B_j= (0,\langle g_a^j \colon a\in A\rangle)$ by $g_a^j=b_j$ if $a=a_0$ and $g_a^j=e$ otherwise. Then $A_1 * A_2=B_1* B_2$ so that there are $E_{11}=(0,\langle e^{11}_a \colon a\in A\rangle),$ $E_{12}=(0,\langle e^{12}_a \colon a\in A\rangle),$ $E_{21}=(0,\langle e^{21}_a \colon a\in A\rangle),$ and $E_{22}=(0,\langle e^{22}_a \colon a\in A\rangle),$ such that $A_1=E_{11}* E_{12},$ $A_2=E_{21}* E_{22},$ $B_1=E_{11}* E_{21},$ and $B_2=E_{12}* E_{22}.$ Hence, we see that $G$ satisfies RDP$_1$.

Conversely, let $G$ satisfy RDP$_1$.  To be more compact, we will write
$(n,\langle g_a\rangle)$ instead of $(n,\langle g_a: a \in A\rangle)$.  We have the following cases.

(i) $(0, \langle  g_a \rangle) *(0,\langle  h_a\rangle)=
(0,\langle  u_a \rangle) *(0,\langle  v_a \rangle)$ for $g_a,h_a,u_a,v_a \ge e$ for each $i \in I$. The proof of this case trivial.

(ii) $(0, \langle  g_a \rangle) *(n,\langle  h_a \rangle)=
(0,\langle  u_a \rangle) *(n,\langle  v_a \rangle)$ for $g_a,u_a \ge e$, $h_a,v_a \in G$ for each $a \in A$, and $n>0 $. Then $g_ah_a=u_av_a$ for each $a \in A$. Since $G$ is directed, for any $a\in A$, there is an element $d_a \in G$ such that $h_a,v_a \ge d_a$. Then $g_ah_ad_a^{-1}=u_av_ad_a^{-1}$ and for them we have the RDP$_1$ decomposition

$$
\begin{matrix}
g_a  &\vline & c_a^{11} & c_a^{12}\\
h_ad_a^{-1} &\vline & c_a^{21} & c_a^{22}\\
  \hline     &\vline      &u_a & v_ad_a^{-1}
\end{matrix}\ \ ,
$$
where $c_a^{12}\, \mbox{\rm \bf com}\, c_a^{21}$. Then

$$
\begin{matrix}
g_a  &\vline & c_a^{11} & c_a^{12}\\
h_a &\vline & c_a^{21} & c_a^{22}d_a\\
  \hline     &\vline      &u_a & v_a
\end{matrix}\ \
$$
and
$$
\begin{matrix}
(0,\langle g_a\rangle)  &\vline & (0,\langle c_a^{11}\rangle) & (0,\langle c_a^{12}\rangle)\\
(n,\langle h_a\rangle) &\vline & (0,\langle c_a^{21}\rangle) & (n,\langle c_a^{22}d_a\rangle)\\
  \hline     &\vline      &(0,\langle u_a\rangle) & (n,\langle v_a \rangle)
\end{matrix}\ \
$$
is an RDP$_1$ decomposition for (ii) in the po-group $A \lsemiw G^A$.

(iii) $(n, \langle  g_a \rangle) *(0,\langle  h_a \rangle)=
(n,\langle  u_a \rangle) *(0,\langle  v_a \rangle)$ for $h_a,v_a \ge e$, $g_a,u_a \in G$ for each $a \in A$, and $n >0$. The directness of $G$ implies, for each $a \in A$, there is $d_a \in G$ such that $d_a\le g_a,h_a,u_a,v_a$. Equality (iii) can be rewritten in the equivalent form $(n, \langle  d_a^{-1}g_a \rangle) *(0,\langle  h_ad_a^{-1} \rangle)=
(n,\langle  d_a^{-1}u_a \rangle) *(0,\langle  v_ad_a^{-1} \rangle)$ which yields $d_a^{-1}g_ah_{an}d^{-1}_{an}=d_a^{-1}u_av_{an}d_{an}^{-1}$.
It entails an RDP$_1$ decomposition in the po-group $G$

$$
\begin{matrix}
d_a^{-1}g_a  &\vline & c_a^{11} & c_a^{12}\\
h_{an}d^{-1}_{an} &\vline & c_a^{21} & c_a^{22}\\
  \hline     &\vline      &d_a^{-1}u_a & v_{an}
d^{-1}_{an}\end{matrix}\ \ ,
$$
consequently,
$$
\begin{matrix}
g_a  &\vline & d_ac_a^{11} & c_a^{12}\\
h_{an} &\vline & c_a^{21} & c_a^{22}d_{an}\\
  \hline     &\vline      &u_a & v_{an}
\end{matrix}\ \ ,
$$
and it gives an RDP$_1$ decomposition of (iii) in the Wreath product $A\lsemiw G^I$

$$
\begin{matrix}
(n,\langle g_a\rangle)  &\vline & (n,\langle d_ac_a^{11}\rangle) & (0,\langle c_{an^{-1}}^{12}\rangle)\\
(0,\langle h_a\rangle) &\vline & (0,\langle c_{an^{-1}}^{21}\rangle) & (0,\langle c_{an^{-1}}^{22}d_a\rangle)\\
  \hline     &\vline      &(n,\langle u_a\rangle) & (0,\langle v_a \rangle)
\end{matrix}\ \ .
$$

(iv) $(n, \langle  g_a \rangle) *(0,\langle  h_a \rangle)=
(0,\langle  u_a \rangle) *(n,\langle  v_a \rangle)$ for $g_a,v_a\in G $, $h_a,u_a \ge e$ for each $a \in A$, $n >0$.

Then $g_ah_{an} = u_av_a$ for each $a \in A$, which implies $u_a^{-1}g_a= v_a h^{-1}_{an}$.   If we use the decomposition

$$
\begin{matrix}
(n,\langle g_a\rangle)  &\vline & (0,\langle u_a\rangle) & (n,\langle u_a^{-1}g_a\rangle)\\
(0,\langle h_a\rangle) &\vline & (0,e^A) & (0,\langle h_a\rangle)\\
  \hline     &\vline      &(0,\langle u_a\rangle) & (n,\langle v_a \rangle)
\end{matrix}\ \ ,
$$
then it gets an RDP$_1$ decomposition for (iv); trivially  $(0,e^A)\, \mbox{\rm \bf com}\, (n,\langle u_a^{-1}g_a\rangle)$.

(v) $(n, \langle  g_a \rangle) *(0,\langle  h_a \rangle)=
(m_1,\langle  u_a \rangle) *(m_2,\langle  v_a \rangle)$ for $g_a,u_a, v_a\in G $, $h_a\ge e$ for each $a \in A$, where $m_1,m_2>0$ and $m_1m_2=n$. Then $g_ah_{an}=u_av_{am_1}$ for each $a\in A$. Hence, the following table gives an RDP$_1$ decomposition for (v)

$$
\begin{matrix}
(n,\langle g_a\rangle)  &\vline & (m_1,\langle u_a\rangle) & (m_2,\langle u^{-1}_{am_1^{-1}}g_{am_1^{-1}}\rangle)\\
(0,\langle h_a\rangle) &\vline & (0,e^A) & (0,\langle h_a\rangle)\\
  \hline     &\vline      &(m_1,\langle u_a\rangle) & (m_2,\langle v_a \rangle)
\end{matrix}\ \ .
$$

(vi) $(0, \langle  g_a \rangle) *(n,\langle  h_a \rangle)=
(m_1,\langle  u_a \rangle) *(m_2,\langle  v_a \rangle)$ for $h_a,u_a, v_a\in G $, $g_a\ge e$ for each $a \in A$, where $m_1,m_2>0$ and $m_1m_2=n$. For it we have $u_av_{am_1}=g_ah_a$ for each $a \in A$ and  the following RDP$_1$ decomposition

$$
\begin{matrix}
(0,\langle g_a\rangle)  &\vline & (0,\langle g_a\rangle) & (0,e^A)\\
(n,\langle h_a\rangle) &\vline  &(m_1,\langle g_a^{-1}u_a\rangle) & (m_2,\langle v_a\rangle)\\
  \hline     &\vline      &(m_1,\langle u_a\rangle) & (m_2,\langle v_a \rangle)
\end{matrix}\ \ .
$$

(vii) $(n_1, \langle  g_a \rangle) *(n_2,\langle  h_a \rangle)=
(m_1,\langle  u_a \rangle) *(m_2,\langle  v_a \rangle)$ for $g_a, h_a,u_a, v_a\in G $, for each $a \in A$, where $n_1, n_2, m_1,m_2 >0$, $n_1n_2=n=m_1m_2$ and $m_1> n_1$. Then $g_ah_{an_1}=u_av_{a{m_1}}$ for each $a \in A$, and using the equality $n^{-1}_1=n_2n^{-1}$, case (vii)  has the following RDP$_1$ decomposition

$$
\begin{matrix}
(m_1,\langle u_a\rangle)  &\vline & (n_1,\langle g_a\rangle) & (n_1^{-1}m_1,\langle g_{a{n_1}^{-1}}^{-1}u_{an_1^{-1}}\rangle)\\
(m_2,\langle v_a \rangle) &\vline & (0,e^A) & (m_2,\langle v_a\rangle)\\
  \hline     &\vline      &(n_1,\langle g_a\rangle) & (n_2,\langle h_a\rangle)
\end{matrix}\ \ \mbox{ if } m_1 >n_1
$$
is an RDP$_1$ decomposition.

(viii) $(n_1, \langle  g_a \rangle) *(n_2,\langle  h_a \rangle)=
(m_1,\langle  u_a \rangle) *(m_2,\langle  v_a \rangle)$ for $g_a, h_a,u_a, v_a\in G $, for each $a \in A$, where $n_1, n_2, m_1,m_2 >0$, $n_1n_2=n=m_1m_2$ and $n_1> m_1$. Then $g_a h_{an_1}=u_av_{am_1}$ for each $a \in A$. Hence, the following table

$$
\begin{matrix}
(n_1,\langle g_a\rangle)  &\vline & (m_1,\langle u_a\rangle) & (m_1^{-1}n_1,\langle u_{am_1^{-1}}^{-1}g_{am_1^{-1}}\rangle)\\
(n_2,\langle h_a \rangle) &\vline & (0,e^A) & (n_2,\langle h_a\rangle)\\
  \hline     &\vline      &(m_1,\langle u_a\rangle) & (m_2,\langle v_a\rangle)
\end{matrix}\ \ \mbox{ if } n_1 >m_1
$$
gives an RDP$_1$ decomposition for (viii).

(ix) $(n_1, \langle  g_a \rangle) *(n_2,\langle  h_a \rangle)=
(m_1,\langle  u_a \rangle) *(m_2,\langle  v_a \rangle)$ for $g_a, h_a,u_a, v_a\in G $, for each $a \in A$, where $n_1, n_2, m_1,m_2 >0$, $n_1n_2=n=m_1m_2$ and $n_1= m_1$.  Then $g_ah_{an_1}= u_av_{an_1}$. The directness of $G$ entails that, for
every $a \in A$, there is $d_a\in G$ such that $g_a,h_a,u_a,v_a\ge d_a$. Hence, $d_a^{-1}g_ah_{an_1}d^{-1}_{an_1}= d_a^{-1}u_av_{an_1}d^{-1}_{an_1}$. The RDP$_1$ holding in $G$ gives the following RDP$_1$ table

$$
\begin{matrix}
d_a^{-1}g_a  &\vline & c_a^{11} & c_a^{12}\\
h_{an_1}d^{-1}_{an_1} &\vline & c_a^{21} & c_a^{22}\\
  \hline     &\vline      &d_a^{-1}u_a & v_{an_1}d^{-1}_{an_1}
\end{matrix}\ \ ,
$$
so that

$$
\begin{matrix}
g_a  &\vline & d_ac_a^{11} & c_a^{12}\\
h_{an_1} &\vline & c_a^{21} & c_a^{22}d_{an_1}\\
  \hline     &\vline      &u_a & v_{an_1}
\end{matrix}\ \ .
$$

It gives the RDP$_1$ decomposition of (ix)

$$
\begin{matrix}
(n_1,\langle g_a \rangle)  &\vline & (n_1,\langle d_ac_a^{11}\rangle) & (0,\langle c_{an_1^{-1}}^{12}\rangle )\\
(n_2,\langle  h_a\rangle) &\vline & (0,\langle c_{an_1^{-1}}^{21}\rangle) & (n_2,\langle c_{an_1^{-1}}^{22}d_{an_1^{-1}}\rangle)\\
  \hline     &\vline      &(n_1,\langle u_a \rangle)& (n_2,\langle v_a \rangle)
\end{matrix}\ \ .
$$

Now assume that $G_2$ satisfy RDP$_2$.  By \cite[Prop 4.2(ii)]{DvVe1}, a directed po-group $G$ satisfies \RDP$_2$ iff $G$ is an $\ell$-group. It is easy to verify that if $G$ is an $\ell$-group, so is $A\lsemiw G^A$.
\end{proof}

\begin{corollary}\label{co:4.2}
Let $A$ be a linearly ordered po-group and $G$ be a directed po-group. The restricted Wreath product $A\,\mbox{\rm  wr }G$ satisfies  \RDP\, (\RDP$_1$, \RDP$_2$, respectively) if and only if $G$ satisfies \RDP\, (\RDP$_1$, \RDP$_2$, respectively).
\end{corollary}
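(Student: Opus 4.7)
The plan is to piggy-back on Theorem \ref{th:4.1}: the restricted Wreath product $A\,\mbox{\rm wr}\,G$ is a subgroup of the unrestricted Wreath product $A\,\mbox{\rm Wr}\,G$ with the induced partial order, so it is automatically a po-group, and both ``only if'' and ``if'' directions can be obtained from the proof of Theorem \ref{th:4.1} after a small finite-support check.

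For the ``only if'' direction, I would repeat verbatim the argument from the beginning of the proof of Theorem \ref{th:4.1}: fix any $a_0\in A$ and embed $G$ into $A\,\mbox{\rm wr}\,G$ by sending $g\in G$ to the element $(0,\langle g_a\rangle)$ with $g_{a_0}=g$ and $g_a=e$ otherwise. This element has finite support, so it lies in the restricted Wreath product. Given $g_1g_2=h_1h_2$ in $G^+$, lift to the equation $A_1*A_2=B_1*B_2$ in $(A\,\mbox{\rm wr}\,G)^+$ and apply the assumed RDP (resp.~RDP$_1$, RDP$_2$). Because all four terms have first coordinate $0$ and are positive, any summand $E_{ij}$ in the refinement also has first coordinate $0$ and positive components; then the coordinates away from $a_0$ force $e = e^{ij}_a \cdot e^{kl}_a$ with both factors $\ge e$, hence $e^{ij}_a = e$ for $a \ne a_0$. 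Thus $E_{ij}$ is supported at $\{a_0\}$ and its $a_0$-component provides the required refinement in $G$ (the commutativity condition for RDP$_1$ and the meet condition for RDP$_2$ pass directly through the embedding).

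For the ``if'' direction, the point is that each case (i)--(ix) of the proof of Theorem \ref{th:4.1} produces a refinement whose entries have finite support whenever the input data do. Concretely, suppose $g_a,h_a,u_a,v_a$ are all equal to $e$ outside some finite set $F\subseteq A$. In the cases that use directedness (namely (ii), (iii), (ix)), the directedness element $d_a$ can be chosen to be $e$ for every $a\notin F$, since there we have $g_a=h_a=u_a=v_a=e$ and $e\le e$. Inspecting the RDP$_1$ tables in each of (ii), (iii), (ix), the refinement entries $c_a^{ij}$ are built out of $g_a,h_a,u_a,v_a$ and $d_a$ (or images under the shift $a\mapsto an$), so they too vanish off a finite set. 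The remaining cases (i), (iv)--(viii) produce refinement tables whose entries are (finite products of) $g_a,h_a,u_a,v_a$ themselves or their inverses, again preserving finite support. Hence each refinement lies in $A\,\mbox{\rm wr}\,G$, and RDP (resp.~RDP$_1$) for $G$ transfers to the restricted Wreath product.

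For RDP$_2$, once $G$ is an $\ell$-group the unrestricted Wreath product is an $\ell$-group by \cite[Ex 1.3.27]{Gla}; the restricted Wreath product is a subgroup closed under the coordinatewise meets and joins (since $g_a\vee h_a=g_a\wedge h_a=e$ whenever $g_a=h_a=e$), hence a sub-$\ell$-group, and so satisfies RDP$_2$ by \cite[Prop 4.2(ii)]{DvVe1}. The main (and only nontrivial) obstacle is the finite-support bookkeeping in cases (ii), (iii), (ix), and the observation above that one may take $d_a = e$ off the finite support of the input removes it.
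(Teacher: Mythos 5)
Your proposal is correct and follows essentially the same route as the paper, whose entire proof of this corollary is the single sentence that it is identical to the proof of Theorem \ref{th:4.1}. In fact you supply the finite-support bookkeeping (choosing $d_a=e$ off the support of the data, and checking each refinement table preserves finite support) that the paper leaves implicit, which is exactly the point that needs verification.
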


\begin{proof}
It is identical to the proof of Theorem \ref{th:4.1}.
\end{proof}

\section{Right and Left Wreath Products and RDP's}

In this section we exhibit conditions when a kind of RDP holds in the right and left wreath products.

We note that for $a=(n,\langle x_i: i \in \mathbb Z\rangle)$ and $b=(m,\langle y_i: i \in \mathbb Z \rangle)$ from the right wreath product we have
$a\le b$ iff (i) $n< m$ or (ii) $n=m$ and $x_{i_0} < y_{i_0}$, where $i_0:=\max\{i\in \mathbb Z: x_i \ne y_i\}$. Dually for $a,b$ from the left wreath product. That is, $a\le b$ iff (i) $n<m$ or (ii) $x_{i_0}<y_{i_0}$, where $i_0:=\min \{i \in \mathbb Z: x_i \ne y_i\}$.

\begin{proposition}\label{pr:5.1}
Let $G$ be a directed po-group. The following statements are equivalent
\begin{enumerate}
\item[{\rm (i)}] $\mathbb Z\, \overrightarrow{\mbox{\rm wr}}\, G$ satisfies \RDP$_2$.
\item[{\rm (ii)}] $\mathbb Z\, \overleftarrow{\mbox{\rm wr}}\, G$ satisfies \RDP$_2$.
\item[{\rm (iii)}] $G$ is a linearly ordered group.
\end{enumerate}
In  any such case, the right and left wreath products are linearly ordered groups.
\end{proposition}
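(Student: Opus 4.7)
The proof reduces to the $\ell$-group characterization of \RDP$_2$ from Proposition 4.2(ii) of \cite{DvVe1}: for a directed po-group, \RDP$_2$ is equivalent to being an $\ell$-group. Both $\mathbb Z\,\overrightarrow{\mbox{wr}}\,G$ and $\mathbb Z\,\overleftarrow{\mbox{wr}}\,G$ are directed (taking a sufficiently large first coordinate dominates any element), so the three conditions amount to saying that the relevant wreath product is an $\ell$-group iff $G$ is linearly ordered. I will prove the stronger statement that in this case the wreath product is in fact linearly ordered, which simultaneously establishes the concluding sentence of the proposition.

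For (iii)$\Rightarrow$(i),(ii), suppose $G$ is linearly ordered and take two distinct elements $(n_1,\langle x_i\rangle),(n_2,\langle y_i\rangle)$. If $n_1\ne n_2$ they are comparable via $\mathbb Z$; otherwise the set $\{i\in\mathbb Z:x_i\ne y_i\}$ is nonempty and finite, so it has a greatest element (for $\overrightarrow{\mbox{wr}}$) and a least element (for $\overleftarrow{\mbox{wr}}$), at which the $G$-entries are comparable. The definition of positivity recalled just before the proposition then gives comparability of the two elements, so each wreath product is linearly ordered and in particular is an $\ell$-group satisfying \RDP$_2$.

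For the converse, (i)$\Rightarrow$(iii) and (ii)$\Rightarrow$(iii), I argue contrapositively. Suppose $G$ is not linearly ordered, pick incomparable $g,h\in G$, and set $a=(0,\langle x_i\rangle)$ with $x_0=g$ and $x_i=e$ otherwise, and $b=(0,\langle y_i\rangle)$ with $y_0=h$ and $y_i=e$ otherwise. I claim $a\wedge b$ does not exist in $\mathbb Z\,\overrightarrow{\mbox{wr}}\,G$. For any lower bound $c=(m,\langle z_i\rangle)$: the case $m>0$ is impossible, and any $c$ with $m<0$ is strictly dominated by every $m=0$ lower bound, so a meet would have $m=0$. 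Using the greatest-differing-index order one shows $z_k\le e$ for all $k>0$, and the remaining lower bounds split into two classes: those with some strict $z_{k^\ast}<e$ at a greatest $k^\ast>0$, and those with $z_k=e$ for all $k>0$ and $z_0$ a strict common lower bound of $g,h$ in $G$. A candidate meet $c^\ast$ must lie in the second class (else replacing the top nonzero entry by $e$ yields a strictly larger lower bound); but within that class the negative-index entries are entirely free, and since the nontrivial po-group $G$ has no top, strictly increasing $z^\ast_{-1}$ yields a strictly larger lower bound, contradicting maximality of $c^\ast$. The argument for $\mathbb Z\,\overleftarrow{\mbox{wr}}\,G$ is entirely dual, with positive and negative indices swapping roles. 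The main obstacle is precisely this case analysis: each modification of the candidate meet must be verified both to preserve the lower-bound property with respect to $a$ and $b$ and to strictly increase the candidate in the wreath-product order; once organized around the relevant greatest (or least) differing index, the remaining verifications are routine.
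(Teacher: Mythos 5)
Your proposal is correct and follows essentially the same route as the paper: reduce \RDP$_2$ to the $\ell$-group property via \cite[Prop 4.2(ii)]{DvVe1}, observe that a linearly ordered $G$ makes both wreath products linearly ordered, and for the converse take two elements supported at index $0$ with incomparable entries and show a would-be meet can be strictly enlarged by multiplying its entry at index $-1$ by a strictly positive element of $G$ --- exactly the paper's construction of $z''$. The only cosmetic differences are that you prove the linear ordering of the wreath products directly where the paper cites \cite[Ex 1.3.28]{Gla}, and your ``replace the top nonzero entry by $e$'' reduction to the second class should be phrased as comparison with an already-known class-two lower bound (since intermediate positive-index entries need not vanish), but these do not affect the substance.
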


\begin{proof}
(i) $\Rightarrow$ (ii),(iii). If $G$ is linearly ordered, by \cite[Ex 1.3.28]{Gla}, the right and left wreath products are linearly ordered groups.  By \cite[Prop 4.2(ii)]{DvVe1}, a directed po-group  satisfies \RDP$_2$ iff $G$ is an $\ell$-group, so that the both wreath products satisfy RDP$_2$.

(ii) $\Rightarrow$ (i). If $G$ is trivial, i.e. $G=\{e\}$, the statement is trivial. Thus we assume that $G$ is non-trivial and let $\mathbb Z\, \overrightarrow{\mbox{\rm wr}}\, G$ satisfy RDP$_2$. Then the right wreath product is an $\ell$-group. We claim that $G$ is linearly ordered. If not, let $a,b>e$ be two non-comparable elements such that there is an element $c \in G$ with $e<c<a,b$. Define two elements $x=(0,\langle x_i: i \in \mathbb Z\rangle)$ and $y=(0,\langle y_i: i \in \mathbb Z\rangle)$, where $x_0=a$, $y_0=b$ and $x_i=y_i=e$ for each $i \in \mathbb Z \setminus \{0\}$. Let $z =(0,\langle z_i: i \in \mathbb Z\rangle) =x\wedge y \in \mathbb Z\, \overrightarrow{\mbox{\rm wr}}\, G$. We assert that every $z_i=e$ for each integer $i>0$. If not, take the largest index $i_0>0$ such that $z_{i_0}\ne e$. Then $z_{i_0}>e$ because $z$ is a positive element in $\mathbb Z\, \overrightarrow{\mbox{\rm wr}}\, G$, and on the other hand, $z_{i_0}<e$ because $z\le a$ which is a contradiction. Consequently, $z_i = e$ for each $i>0$.


Now define an element $z''=(0,\langle z''_i: i \in \mathbb Z\rangle)\in \mathbb Z\, \overrightarrow{\mbox{\rm wr}}\, G$ in the following way: $z''_0=z_0,$ $z''_i= e$ for each $i>0$, $z''_{-1}= z_{-1}c$, and $z''_i= z_i$ for $i<-1$. Then $z''$ is a lower bound for $x$ and $y$, so that $z''\le z=x\wedge y$. But on the other hand, by construction of $z''$, we have $z<z''$ which is absurd. Hence, $G$ is linearly ordered.

The case (iii) $\Rightarrow$ (i) is dually to the previous implication.
\end{proof}

\begin{proposition}\label{pr:5.2}
Let $G$ be a directed po-group. If $\mathbb Z\, \overrightarrow{\mbox{\rm wr}}\, G$ $(\mathbb Z\, \overleftarrow{\mbox{\rm wr}}\, G)$ satisfies \RDP$_1$ (\RDP), then so satisfies $G$.
\end{proposition}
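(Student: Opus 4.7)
The plan is to adapt the pattern from the beginning of the proof of Theorem \ref{th:4.1}: embed a prospective RDP (or RDP$_1$) equation from $G$ into the wreath product at the origin, then read off the conclusion from the $0$-th coordinate of the decomposition supplied by the wreath product. The one new complication is that positivity in $\mathbb Z\, \overrightarrow{\mbox{wr}}\, G$ is governed by the \emph{leading} non-trivial coordinate rather than by coordinate-wise non-negativity, so one has to track where the support of each RDP-factor can lie.

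Write $W := \mathbb Z\, \overrightarrow{\mbox{wr}}\, G$. Given $g_1, g_2, h_1, h_2 \in G^+$ with $g_1 g_2 = h_1 h_2$, form $X_j = (0, \langle \xi^j_i\rangle)$ and $Y_j = (0, \langle \eta^j_i\rangle)$ where $\xi^j_0 = g_j$, $\eta^j_0 = h_j$, and all other entries equal $e$; these lie in $W^+$ and satisfy $X_1 * X_2 = Y_1 * Y_2$. Apply RDP (resp.\ RDP$_1$) in $W$ to obtain positive $C_{jk} = (n_{jk}, \langle c^{jk}_i\rangle)$ fitting into the standard decomposition table, together with $C_{12}\,\com\, C_{21}$ in the RDP$_1$ case. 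Since $X_1$ has first coordinate $0$ and each $n_{jk} \geq 0$, all $n_{jk} = 0$. A support argument then forces the support of every $C_{jk}$ to lie in $\{i \leq 0\}$: if some $C_{jk}$ had its largest non-trivial index at $j_0 > 0$, positivity would give $c^{jk}_{j_0} > e$, and the cancellation in the corresponding product equation (either the row or column equation that $C_{jk}$ participates in) would force its partner to have either a strictly negative entry at its own top support index, or a non-trivial entry strictly above $j_0$, contradicting positivity or maximality of $j_0$ respectively. Hence $c_{jk} := c^{jk}_0 \in G^+$ for all $j,k$, and the $0$-th coordinate slice of the $W$-decomposition is precisely an RDP decomposition of $g_1g_2 = h_1h_2$ in $G$.

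For the commutativity clause of RDP$_1$, let $x, y \in G$ with $e \leq x \leq c_{12}$ and $e \leq y \leq c_{21}$; the cases $x = e$ or $y = e$ yield $xy = yx$ trivially, so assume $x > e$ and $y > e$. Define $X = (0, \langle \xi_i \rangle)$ with $\xi_0 = x$ and $\xi_i = c^{12}_i$ for $i \neq 0$, and define $Y$ analogously from $y$ and the entries of $C_{21}$. Because the support of $C_{12}$ is contained in $\{i \leq 0\}$, the largest non-trivial index of $X$ is $0$ with value $x > e$, so $X \in W^+$; the product $C_{12} * X^{-1}$ collapses to an element supported only at $0$ with value $c_{12} x^{-1} \geq e$, whence $X \leq C_{12}$. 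The same reasoning gives $0 \leq Y \leq C_{21}$. Then $C_{12}\,\com\, C_{21}$ yields $X * Y = Y * X$, and the $0$-th coordinate of this identity reads $xy = yx$, establishing $c_{12}\,\com\, c_{21}$ in $G$.

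The left wreath product $\mathbb Z\, \overleftarrow{\mbox{wr}}\, G$ is handled dually: positivity is now governed by the smallest non-trivial index, so the same chain of arguments localises every $C_{jk}$ into $\{i \geq 0\}$ and the auxiliary elements $X, Y$ are built in the same fashion. The main obstacle in the whole argument is the support-localisation step, which is the point where the right/left wreath ordering (as opposed to coordinate-wise ordering) plays its essential role.
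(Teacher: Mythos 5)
Your proposal is correct and follows essentially the same route as the paper's proof: embed the equation of $G$ at coordinate $0$ of the wreath product, use the fact that the leading nontrivial entry of a positive element must exceed $e$ to force the supports of all four decomposition factors into $\{i\le 0\}$, and then read off the $0$-th coordinate of the table. You additionally spell out the verification of the commutativity clause of RDP$_1$ (choosing witnesses below $C_{12}$ and $C_{21}$ whose off-origin entries copy those of $C_{12}$ and $C_{21}$, which is needed precisely because coordinate-wise comparison fails in the right wreath order), a point the paper's proof merely asserts.
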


\begin{proof}
If $G$ is trivial, the statement of evident. So, we suppose that $G$ is non-trivial.

Assume the right wreath product satisfies RDP$_1$ (the case of RDP is analogous) and
let in $G$, we have $a_1a_2=b_1b_2$ for $a_1,a_2,b_1,b_2 \in G^+$. Without loss of generality we can assume that $a_1,a_2,b_1,b_2 >e$. We define the following elements in $\mathbb Z\, \overrightarrow{\mbox{\rm wr}}\, G$: $x_j=(0,\langle x_i^j: j \in \mathbb Z\rangle)$ and $y_j=(0,\langle y_i^j: j \in \mathbb Z\rangle)$ for $j=1,2$, where $x^j_0=a_j$, $y^j_0=b_j$ and $x^j_i=e=y^j_i$ for $i\ne 0$ and $j=1,2$. Then we have the following RDP$_1$ table

$$
\begin{matrix}
(0,\langle x_i^1\rangle)  &\vline & (0,\langle c_i^{11}\rangle) & (0,\langle c_i^{12}\rangle )\\
(0,\langle x_i^2\rangle) &\vline & (0,\langle c_i^{21}\rangle) & (0,\langle c_i^{22}\rangle)\\
  \hline     &\vline      &(0,\langle y^1_i \rangle)& (0,\langle y^2_i \rangle)
\end{matrix}\ \ ,
$$
where $(0,\langle c^{12}_i\rangle)\, \mbox{\rm \bf com}\, (0,\langle c_i^{21}\rangle)$. We assert that for each $i>0$, $c_i^{jk}= e$, where $j,k=1,2$. Indeed, let $i_0$ be the largest index $i>0$ such that $c_{i_0}\ne e$. Then $c_{i_0}>e$ and from $x_{i_0}^1=e=c_{i_0}^{11}c_{i_0}^{12}$ we conclude $c_{i_0}^{12}<e$. Hence, there is another integer $j_0>i_0$ such that $c_{j_0}^{12}>e$, otherwise $(0,\langle c_i^{12}\rangle)<0$. Hence, $c_{j_0}^{11}<e$ which is impossible. Hence, $c_i^{11}=e$ for each integer $i>0$. In the similar way we prove that all $c_i^{jk}=e$ for $i>0$ and $j,k=1,2$. In particular, we have the following RDP$_1$ table in $G$

$$
\begin{matrix}
a_1 &\vline & c_0^{11} & c_0^{12}\\
a_2 &\vline & c_0^{21} & c_0^{22}\\
  \hline     &\vline      &b_1 & b_2
\end{matrix}\ \
$$
and $c_0^{12}\, \mbox{\rm \bf com}\, c_0^{21}$.
\end{proof}

Now we present some results concerning the right and left wreath products and RDP's.

For any element $x=(n,\langle x_i\rangle)\in \mathbb Z\, \overrightarrow{\mbox{\rm wr}}\, G$ we denote by $supp(x):=\{i \in \mathbb Z: x_i\ne e\}$.

\begin{proposition}\label{pr:5.3}
Let $G$ be a directed po-group. Then the following statements are equivalent:

\begin{enumerate}
\item[{\rm (i)}] $G$ satisfies \RIP.
\item[{\rm (ii)}] $\mathbb Z\, \overrightarrow{\mbox{\rm wr}}\, G$ satisfies \RIP.
\item[{\rm (iii)}] $\mathbb Z\, \overleftarrow{\mbox{\rm wr}}\, G$ satisfies \RIP.
\end{enumerate}

\end{proposition}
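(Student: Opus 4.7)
The plan is to establish (i) $\Leftrightarrow$ (ii) in detail; the equivalence (i) $\Leftrightarrow$ (iii) follows from the same argument with ``maximum differing index'' replaced by ``minimum differing index'' throughout. I freely use the equivalence \RIP $\Leftrightarrow$ \RDP$_0$ recorded in Section~2.

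\textbf{(ii) $\Rightarrow$ (i).} Given $a_1,a_2\le b_1,b_2$ in $G$, I embed them into $\mathbb Z\,\overrightarrow{\mbox{\rm wr}}\,G$ as $A_j=(0,\langle x^j_i\rangle)$ with $x^j_0=a_j$ and $x^j_i=e$ for $i\ne 0$, and similarly $B_k$ with $y^k_0=b_k$. Clearly $A_j\le B_k$ in the wreath order, so \RIP\ produces an interpolant $C=(p,\langle c_i\rangle)$. A first-coordinate inspection forces $p=0$ (otherwise $C$ exceeds all $B_k$ or falls below all $A_j$ on first coordinate alone), and then the support-control argument of exactly the style used in the proof of Proposition~\ref{pr:5.2} forces $c_i=e$ for every $i>0$: if $i_0^*=\max\{i>0:c_i\ne e\}$ existed, comparison at $i_0^*$ would demand $c_{i_0^*}>e$ from $A_j\le C$ and $c_{i_0^*}<e$ from $C\le B_k$. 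A direct inspection at index~$0$ then verifies $a_j\le c_0\le b_k$ in $G$ (regardless of $c_i$ for $i<0$).

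\textbf{(i) $\Rightarrow$ (ii).} Let $a_j=(n_j,\langle x^j_i\rangle)\le b_k=(m_k,\langle y^k_i\rangle)$ and set $n=\max\{n_1,n_2\}$, $m=\min\{m_1,m_2\}$, so $n\le m$. If $n+1<m$, then $c=(n+1,e^{\mathbb Z})$ interpolates by first coordinate alone. If $n<m=n+1$, then $c=(m,\langle c_i\rangle)$ works once its sequence is placed strictly below every $b_k$ with $m_k=m$ in the wreath order; this uses directedness of $G$ to supply, at the largest index where two such $y^k$ differ, a common strict lower bound. The essential case is $n=m=:p$, in which every interpolant is forced to have first coordinate $p$, and the problem reduces to interpolating four sequences in $G^{\mathbb Z}_{\mathrm{fin}}$ under the order inherited from $\mathbb Z\,\overrightarrow{\mbox{\rm wr}}\,G$. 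For this reduced problem I would recurse downward on the largest index $M$ at which the four sequences fail to coincide. Above $M$ the value $c_i$ is forced to the common value. At $i=M$, the hypothesis implies $x^j_M\le y^k_M$ in $G$ for every $j,k$ (strict exactly when the pair differs at $M$), and \RIP\ in $G$ yields $c_M$ with $x^j_M\le c_M\le y^k_M$. Setting $J_0=\{j:x^j_M=c_M\}$ and $K_0=\{k:c_M=y^k_M\}$, the subcases $|J_0|,|K_0|\le 1$ are settled directly, choosing $c_{<M}$ either as a tying sequence $x^{j_0}|_{<M}$ or $y^{k_0}|_{<M}$, or by (upward/downward) directedness of the subgroup of sequences supported in $(-\infty,M)$ (which is inherited from componentwise directedness, hence from directedness of $G$). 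Only in the ``full tie'' subcase $|J_0|=|K_0|=2$ do I recurse: the restrictions $x^j|_{<M}$ and $y^k|_{<M}$ again satisfy $x^j|_{<M}\le y^k|_{<M}$, with the new top differing index strictly below $M$, so finite support forces termination.

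\textbf{Main obstacle.} The delicate point is the ``full tie'' subcase $|J_0|=|K_0|=2$ of the reduced problem. \RIP\ in $G$ delivers only weak inequalities at $M$, whereas the wreath order demands a strict drop at the leading differing index for any nontrivial comparison; the strictness burden must therefore be transported to the next level down by the recursion. Verifying that the assembled $c$, built from the recursive $c_{<M}$, the \RIP-value $c_M$ at the top, and the common value above $M$, actually satisfies $x^j\le c\le y^k$ for \emph{every} $j,k$ (including those handled by strictness at $M$ and those handled by recursion below) is the step requiring the most care.
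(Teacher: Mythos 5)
Your direction (ii) $\Rightarrow$ (i) is essentially the paper's own argument: the same embedding at index $0$, the same maximal-support argument forcing $c_i=e$ for $i>0$, and the same inspection of the top differing index to read off $a_j\le c_0\le b_k$. The real difference is in (i) $\Rightarrow$ (ii). After reducing to four elements with a common first coordinate, the paper introduces the four indices $i_0(j,k)=\max\{i: a^j_i\ne b^k_i\}$ and runs an explicit eighteen-case analysis on their relative positions, in each case either deriving a contradiction or exhibiting one of $a_1,a_2,b_1,b_2$ (or, when all four indices coincide, an \RIP-interpolant of $G$ inserted at that index) as the in-between element. You instead apply \RIP\ in $G$ once, at the single largest index $M$ where the four sequences fail to coincide (which, once the degenerate coincidences among $a_1,a_2,b_1,b_2$ are set aside, equals $\max_{j,k}i_0(j,k)$), and then patch the tail below $M$ according to which of the inequalities $x^j_M\le c_M\le y^k_M$ are tight. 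This is a genuinely different and in fact cleaner organization: the eighteen cases collapse into a short discussion of the sets $J_0$ and $K_0$, and the directedness of the subgroup of sequences supported below $M$ that you invoke is the same device the paper uses in its case (a).

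Two repairs are needed in your write-up. First, the case split is not exhaustive as stated: treating ``$|J_0|,|K_0|\le 1$'' and ``$|J_0|=|K_0|=2$'' omits $(|J_0|,|K_0|)\in\{(2,0),(2,1),(0,2),(1,2)\}$. These are covered by exactly the tools you list, but this must be said: for $(2,1)$ with $K_0=\{k_0\}$ take $c=y^{k_0}$ (then $c\le y^{k'}$ for the other index $k'$ because $c_M=y^{k_0}_M<y^{k'}_M$ and the sequences agree above $M$, while $x^1,x^2\le y^{k_0}$ by hypothesis); dually for $(1,2)$; and $(2,0)$, $(0,2)$ follow from upward, respectively downward, directedness of the sequence subgroup. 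Second, and more importantly, your declared ``main obstacle'' is vacuous: $|J_0|=|K_0|=2$ would give $x^1_M=x^2_M=c_M=y^1_M=y^2_M$, i.e.\ all four sequences coincide at $M$, contradicting the choice of $M$. Since some pair of the four values must differ at $M$ and all satisfy $x^j_M\le c_M\le y^k_M$, one always has $|J_0|\le 1$ or $|K_0|\le 1$; no recursion is ever triggered and the termination discussion can be deleted. With the case list completed and the phantom recursion removed, your argument is correct and shorter than the paper's.
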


\begin{proof}
For $G$ the statement is evident, so let us assume $G$ is non-trivial.

(ii) $\Rightarrow$ (i). Let $a_1,a_2\le b_1,b_2$ hold in $G$. To show RIP in $G$, it is enough to assume that $a_1\ne a_2$ and $b_1\ne b_2$. As in the proof of Proposition \ref{pr:5.2}, we define four elements $x_1=(0,\langle x_i^1: i \in \mathbb Z\rangle)$, $x_2=(0,\langle x_i^2: i \in \mathbb Z\rangle)$, $y_1=(0,\langle y_i^1: i \in \mathbb Z\rangle)$, and $y_2=(0,\langle y_i^2: i \in \mathbb Z\rangle)$. Then $x_1,x_2\le y_1,y_2$ in the right wreath product. Hence, there is an element $z=(0,\langle z_i: i \in \mathbb Z\rangle) \in \mathbb Z\, \overrightarrow{\mbox{\rm wr}}\, G$ such that $x_1,x_2 \le z\le y_1,y_2$. We assert that $z_i=e$ for any $i>0$. Indeed, let  $i_0=\max\{i >0: z_i\ne e\}$. Then $e<z_{i_0}<e$ which is impossible.  Now let $i_0(x_j)=\max\{i\in \mathbb Z: x_i^j\ne z_i\}$  and $i_0(y_j)=\max\{i\in \mathbb Z: y^j_i\ne z_i\}$. Then $x^j_{i_0(x_j)}< z_{i_0(x_j)}$ and $z_{i_0(y_j)}< y_{i_0(x_j)}^j$ for $j=1,2$.
If $i_0(x_j)=0$, then $a_j<z_0$. If $i_0(x_j)<0$, then $a_j=z_0$, so that $a_1,a_2 \le z_0$. Dually we show that $z_0\le b_1,b_2$.

In the analogous dual way we prove that (iii) implies (i).

(i) $\Rightarrow$ (ii). Let for $a_1=(n_1,\langle a_i^1: i \in \mathbb Z\rangle),$ $a_2=(n_2,\langle a_i^2: i \in \mathbb Z\rangle)$, $b_1= (m_1,\langle b_i^1: i \in \mathbb Z\rangle)$, and $b_2= (m_2,\langle b_i^2: i \in \mathbb Z\rangle)$ we have $a_1,a_2\le b_1,b_2$. Without loss of generality, we can assume that $n_1\le n_2\le m_1\le m_2.$ We have the following cases: (a) $n_2<m_1$. If $n_1 < n_2$, then $(n_1,\langle a_i^1: i \in \mathbb Z\rangle), (n_2,\langle a_i^2: i \in \mathbb Z\rangle)\le (n_2,\langle a_i^2: i \in \mathbb Z\rangle)\le (m_1,\langle b_i^1: i \in \mathbb Z\rangle), (m_2,\langle b_i^2: i \in \mathbb Z\rangle)$. If $n_1=n_2$, directness of $G$ implies that, for each $i \in supp(a_1)\cup supp(a_2)$, there is $d_i\in G$ such that $a^1_i,a^2_i < d_i$. We define $c=(n_1,\langle c_i: i \in \mathbb Z\rangle)$, where $c_i=d_i$ whenever $i \in supp(a_1)\cup supp(a_2)$ and $c_i = e$ otherwise. Then $a_1,a_2\le c\le b_1,b_2$. (b) $n_2=m_1$. If $n_1<n_2$, then $a_1<a_2$, so that $a_1,a_2\le a_2\le b_1,b_2$. Now let $n_1=n_2$. If $m_1 <m_2$, then $b_1<b_2$ so that $a_1,a_2\le b_1\le b_1,b_2$. Finally, let $m_1=m_2$, i.e. $n:=n_1=n_2=m_1=m_2$. If four, or three or two elements from $\{a_1,a_2,b_1,b_2\}$ coincide, the RIP holds trivially. Hence, we assume that $a_1,a_2 < b_1,b_2$, and in addition, $a_1 \ne a_2$ and $b_1\ne b_2$.

We define $i_0(j,k):=\max\{i: a_i^j\ne b^k_i\}$ for $j,k=1,2$; then $a^j_{i_0(j,k)}< b^k_{i_0(j,k)}$. Assume $i_0(1,1)\le i_0(2,1)$. Then $b^1_{i_0(1,1)} > a^1_{i_0(1,1)}$ and $b_i^1=a^1_i$ if $ i >i_0(1,1)$, and $b^1_{i_0(2,1)} >a^2_{i_0(2,1)}$ and  $b_i^1=a_i^2=a_i^1$ for $i> i_0(2,1)$.  Similarly, let $i_0(1,2)\le i_0(2,2)$. Then $b^2_{i_0(1,2)}> a^1_{i_0(1,2)}$ and $b_i^2=a^1_i$ if $i> i_0(1,2)$, and $b^2_{i_0(2,2)}> a^2_{i_0(2,2)}$ and $b_i^2=a_i^2=a_i^1$ for $i> i_0(2,2)$. It is sufficient to exhibit the following 18 cases:

(1) Let $i_0:=i_0(1,2)=i_0(2,2)=i_0(1,1)=i_0(2,1)$. Then $a^1_{i_0},a^2_{i_0}<b^1_{i_0}, b^2_{i_0}$. By RIP holding in $G$, there is an element $c'\in G$ such that $a^1_{i_0},a^2_{i_0}\le c'\le b^1_{i_0}, b^2_{i_0}$. If $a^1_{i_0},a^2_{i_0}< c'< b^1_{i_0}, b^2_{i_0}$, then the element $c=(n,\langle c_i: i\in \mathbb Z  \rangle)$, where $c_i=e$ if $i< i_0$, $c_{i_0}=c'$, and $c_i = a^1_i$ ($a_i^1=a_i^2=b_i^1=b_i^2$) for $i>i_0$. Then $a_1,a_2<c<b_1,b_2$. If $c'$ coincides with one of $\{a^1_{i_0},a^2_{i_0}, b^1_{i_0}, b^2_{i_0}\}$, say $a^1_{i_0}$, then we have two subcases: (i) $a^2_{i_0}<a^1_{i_0}$ and (ii) $a^2_{i_0}=a^1_{i_0}$. In the subcase (i) the element $c=a_1$, we have $a_1,a_2\le c \le b_1,b_2$. In the subcase (ii), there is $d\in G$ such that $d>a^1_{i_0-1}, a^2_{i_0-1}$. If we define $c_i=e$ if $i<i_0-1$, $c_{i_0-1}=d$ and $c_i=a^1_i$ if $i>i_0$. Then the element $c=(n,\langle c_i: i \in \mathbb Z\rangle)$ satisfies $a_1,a_2\le c \le b_1,b_2$. In a similar way we proceed with the case when $c'$ coincides with $b^1_{i_0}$ or $b^2_{i_0}$.

(2) Let $i_0(1,2)<i_0(2,2)\le i_0(1,1)< i_0(2,1)$. Then for $i_0=i_0(2,1)$, we have  $a^1_{i_0}=b^2_{i_0}=a^2_{i_0}$, $a^1_{i_0}=b^1_{i_0}$,  and $a^2_{i_0}< b^1_{i_0}$ which implies $b^2_{i_0}<b^1_{i_0}$ and $b^2_{i_0}=b^1_{i_0}$,  an absurd.

The same is true if we assume $i_0(1,2),i_0(2,2), i_0(1,1)< i_0(2,1)$ for arbitrary $i_0(1,2),i_0(2,2), i_0(1,1)$.

(3) Let $i_0(1,2)\le i_0(2,2)< i_0(1,1)= i_0(2,1)$. Then for $i_0=i_0(2,1)$, we have $a^1_{i_0}= b^2_{i_0}$, $a^2_{i_0}= b^2_{i_0}$, $a^1_{i_0} < b^1_{i_0}$, and $a^2_{i_0}< b^1_{i_0}$, which yields $a^1_{i_0}=b^2_{i_0} =a^2_{i_0}$ and $a^1_{i_0}<b^1_{i_0}$, $b^2_{i_0}<b^1_{i_0}$. For $i>i_0$, we have $a^1_i=a^2_i=b^1_i=b^2_i$. Hence, the element $c=b_2$ is an in-between element.

(4) Let $i_0(1,2)<i_0(2,2)= i_0(1,1)= i_0(2,1)$. Then for $i_0=i_0(2,2)$, we have $a^1_{i_0}=b^2_{i_0}$, $a^2_{i_0}<b^2_{i_0}$, $a^1_{i_0}<b^1_{i_0}$, and $a^2_{i_0}<b^1_{i_0}$. Then $b^2_{i_0}<b^1_{i_0}$ and the element $c=b_2$ is an in-between element.

(5) Let $i_0(1,2)=i_0(2,2)< i_0(1,1)< i_0(2,1)$. Then for $i_0=i_0(2,1)$, we have $a^1_{i_0}=b^2_{i_0}$, $a^2_{i_0}=b^2_{i_0}$, $a^1_{i_0}=b^1_{i_0}$, and $a^2_{i_0}<b^1_{i_0}$, which yields $b^2_{i_0}= b^1_{i_0}$ and $b^2_{i_0}< b^1_{i_0}$, an absurd; see also the end of (2).


(6) Let $i_0(1,2)=i_0(2,2)= i_0(1,1)< i_0(2,1)$. Then for $i_0=i_0(2,1)$, we have $a^1_{i_0}=b^2_{i_0}$, $a^2_{i_0}=b^2_{i_0}$, $a^1_{i_0}=b^1_{i_0}$, and $a^2_{i_0}<b^1_{i_0}$, which yields $b^2_{i_0}= b^1_{i_0}$ and $b^2_{i_0}< b^1_{i_0}$, an absurd; see also the end of (2).

Now let us assume that $i_0(2,2)$ is in the interval $[i_0(1,1), i_0(2,1)]$.

(7) Let $i_0(1,2)<i_0(1,1)\le i_0(2,2)< i_0(2,1)$. Then for $i_0=i_0(2,1)$, we have $a^1_{i_0}=b^2_{i_0}$, $a^1_{i_0} =b^1_{i_0}$, $a^2_{i_0}=b^2_{i_0}$ and  $a^2_{i_0}< b^1_{i_0}$ which implies $a^2_{i_0}=a^2_{i_0}$ and $a^2_{i_0}<a^1_{i_0}$, an absurd; see also the end of (2).

(8) Let $i_0(1,2)< i_0(1,1)< i_0(2,2)= i_0(2,1)$. Then for $i_0=i_0(2,1)$, we have $a^1_{i_0}=b^2_{i_0}$, $a^1_{i_0} =b^1_{i_0}$, $a^2_{i_0}<b^2_{i_0}$ and  $a^2_{i_0}< b^1_{i_0}$ which implies $a^2_{i_0}< a^1_{i_0}= b^1_{i_0}=b^2_{i_0}$. Hence, the element $c= a_1$ is an in-between element.

(9) Let $i_0(1,2)<i_0(1,1)= i_0(2,2)= i_0(2,1)$. Then for $i_0=i_0(2,1)$, we have $a^1_{i_0}=b^2_{i_0}$, $a^1_{i_0} <b^1_{i_0}$, $a^2_{i_0}<b^2_{i_0}$ and  $a^2_{i_0}< b^1_{i_0}$ which implies $a^2_{i_0}< a^1_{i_0}= b^2_{i_0}<b^1_{i_0}$. Hence, the element $c= a_1$ is an in-between element.

Now let us assume that $i_0(1,2)$ and $i_0(2,2)$ are in the interval $[i_0(1,1), i_0(2,1)]$.

(10) Let $i_0(1,1)\le i_0(1,2)< i_0(2,2)= i_0(2,1)$. Then for $i_0=i_0(2,1)$, we have $a^1_{i_0}=b^1_{i_0}$, $a^1_{i_0} =b^2_{i_0}$, $a^2_{i_0}<b^2_{i_0}$ and  $a^2_{i_0}< b^1_{i_0}$ which implies $a^2_{i_0}< a^1_{i_0}= b^1_{i_0}= b^2_{i_0}$. Hence, the element $c= a_1$ is an in-between element.

(11) Let $i_0(1,1)< i_0(1,2)= i_0(2,2)= i_0(2,1)$. Then for $i_0=i_0(2,1)$, we have $a^1_{i_0}=b^1_{i_0}$, $a^1_{i_0} <b^2_{i_0}$, $a^2_{i_0}<b^2_{i_0}$ and  $a^2_{i_0}< b^1_{i_0}$ which implies $a^2_{i_0}< a^1_{i_0}= b^1_{i_0}$. Hence, the element $c= a_1$ is an in-between element.

Finally, we assume $i_0(2,2)\le i_0(1,2)$,

(12) Let $i_0(2,2)\le i_0(1,2) <i_0(1,1)=i_0(2,1)$. Then for $i_0=i_0(2,1)$, we have $a^2_{i_0}=b^2_{i_0}$, $a^1_{i_0}=b^2_{i_0}$, $a^1_{i_0}<b^1_{i_0}$ and $a^2_{i_0}<b^1_{i_0}$, which implies $a^1_{i_0}=a^2_{i_0}=b^2_{i_0}< b^1_{i_0}$. Hence, the element $c=b_2$ is an in-between element.

(13) Let $i_0(2,2)< i_0(1,2) =i_0(1,1)=i_0(2,1)$. Then for $i_0=i_0(2,1)$, we have $a^2_{i_0}=b^2_{i_0}$, $a^1_{i_0}<b^2_{i_0}$, $a^1_{i_0}<b^1_{i_0}$ and $a^2_{i_0}<b^1_{i_0}$, which implies $a^1_{i_0}<b^1_{i_0},b^2_{i_0}$ and $a^2_{i_0}= b^2_{i_0}< b^1_{i_0}$. Hence, the element $c=b_2$ is an in-between element.

(14) Let $i_0(2,2)\le i_0(1,2) < i_0(1,1)=i_0(2,1)$. Then for $i_0=i_0(2,1)$, we have $a^2_{i_0}=b^2_{i_0}$, $a^1_{i_0}=b^2_{i_0}$, $a^1_{i_0}<b^1_{i_0}$ and $a^2_{i_0}<b^1_{i_0}$, which implies $a^1_{i_0}=b^2_{i_0}<b^1_{i_0}$ and $a^2_{i_0}= b^2_{i_0}< b^1_{i_0}$. Hence, the element $c=b_2$ is an in-between element.

(15) Let $i_0(2,2)\le i_0(1,1) < i_0(1,2)=i_0(2,1)$. Then for $i_0=i_0(2,1)$, we have $a^2_{i_0}=b^2_{i_0}$, $a^1_{i_0}=b^1_{i_0}$, $a^1_{i_0}<b^2_{i_0}$ and $a^2_{i_0}<b^1_{i_0}$, which implies $a^1_{i_0}= b^1_{i_0} <b^2_{i_0}$ and $a^2_{i_0}= b^2_{i_0} <b^1_{i_0}$. Consequently, $b_1 <b_2$ and $b_2<b_1$ which is impossible.

(16) Let $i_0(2,2)<i_0(1,1) = i_0(1,2)=i_0(2,1)$. Then for $i_0=i_0(2,1)$, we have $a^2_{i_0}=b^2_{i_0}$, $a^1_{i_0}<b^1_{i_0}$, $a^1_{i_0}<b^2_{i_0}$ and $a^2_{i_0}<b^1_{i_0}$, which implies $a^1_{i_0}<b^1_{i_0}, b^2_{i_0}$ and $a^2_{i_0}= b^2_{i_0}< b^1_{i_0}$. Hence, the element $c=b_2$ is an in-between element.

(17) Let $i_0(1,1)\le i_0(2,2) < i_0(1,2)=i_0(2,1)$. Then for $i_0=i_0(2,1)$, we have $a^1_{i_0}=b^1_{i_0}$, $a^2_{i_0}=b^2_{i_0}$, $a^1_{i_0}<b^2_{i_0}$ and $a^2_{i_0}<b^1_{i_0}$, which implies $a^1_{i_0}=b^1_{i_0}< b^2_{i_0}$ and $a^2_{i_0}= b^2_{i_0}< b^1_{i_0}$ which is impossible.

(18) Let $i_0(1,1)< i_0(2,2) = i_0(1,2)=i_0(2,1)$. Then for $i_0=i_0(2,1)$, we have $a^1_{i_0}=b^1_{i_0}$, $a^2_{i_0}<b^2_{i_0}$, $a^1_{i_0}<b^2_{i_0}$ and $a^2_{i_0}<b^1_{i_0}$, which implies $a^1_{i_0}=b^1_{i_0}< b^2_{i_0}$ and $a^2_{i_0}< b^2_{i_0}, b^1_{i_0}$. Hence, the element $c=b_1$ is an in-between element.

Summarizing all cases (1)--(18), we see that RIP holds for the right wreath product. In a dual way, we prove the validity of RIP for the left wreath product.
\end{proof}

Now we present a strengthening of Proposition \ref{pr:5.1}.

\begin{proposition}\label{pr:5.4}
Let $G$ be a non-commutative directed po-group. The following statements are equivalent
\begin{enumerate}
\item[{\rm (i)}] $\mathbb Z\, \overrightarrow{\mbox{\rm wr}}\, G$ satisfies \RDP$_1$.
\item[{\rm (ii)}] $\mathbb Z\, \overleftarrow{\mbox{\rm wr}}\, G$ satisfies \RDP$_1$.
\item[{\rm (iii)}] $G$ is a linearly ordered group.
\end{enumerate}
In  any such case, the right and left wreath products are linearly ordered groups.
\end{proposition}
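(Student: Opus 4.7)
The forward direction (iii) $\Rightarrow$ (i), (ii) is immediate: when $G$ is linearly ordered, \cite[Ex 1.3.28]{Gla} gives that both wreath products are linearly ordered groups, hence $\ell$-groups, which by \cite[Prop 4.2(ii)]{DvVe1} satisfy \RDP$_2$ and therefore \RDP$_1$. This also delivers the concluding sentence of the proposition.

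The content lies in the converse (i) $\Rightarrow$ (iii) (the dual implication (ii) $\Rightarrow$ (iii) proceeds symmetrically, reading support indices from the minimum side as in the treatment of the left wreath product in Proposition \ref{pr:5.2}). Assume $\mathbb Z\,\overrightarrow{\mbox{wr}}\,G$ satisfies \RDP$_1$; by Proposition \ref{pr:5.2}, $G$ satisfies \RDP$_1$ as well. I argue by contradiction: suppose $G$ is not linearly ordered. Using directedness and non-commutativity of $G$, fix elements $g,h\in G^+$ with $g,h>e$ and $gh\ne hg$; using that $G$ is not linearly ordered, fix incomparable positive elements $a,b>e$ in $G$.

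The plan is then to construct four positive elements $a_1,a_2,b_1,b_2$ in $\mathbb Z\,\overrightarrow{\mbox{wr}}\,G$ with $a_1+a_2=b_1+b_2$, designed so that three features hold simultaneously. First, the four linear equations among the $\mathbb Z$-parts of $a_1,a_2,b_1,b_2$ uniquely determine the $\mathbb Z$-parts of $c_{11},c_{12},c_{21},c_{22}$ in every RDP-decomposition. Second, $a_1$ and $b_1$ (and similarly $a_2$ and $b_1$) are incomparable in the wreath product; this is achieved by giving them the same $\mathbb Z$-part and placing $a$ and $b$ respectively at a common maximal-support index. The incomparability in turn forbids each of the four degenerate decompositions with some $c_{jk}=0$ (which would require a corresponding comparability between $a_i$ and $b_j$). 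Third, the coordinate equations at the overlapping indices of the semidirect-product shift link the anti-diagonal pieces $c_{12}$ and $c_{21}$ to $g$ and $h$, so that $[0,c_{12}]$ and $[0,c_{21}]$ contain, for every $g'>e$ in $G$ and at a common index $i$, the element $(0,\langle g'\text{ at }i\rangle)$.

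Following the case pattern of Theorem \ref{th:3.1} and Proposition \ref{pr:5.1}, the \RDP$_1$ commutativity condition $c_{12}\,\mathbf{com}\,c_{21}$, applied to these sub-elements, descends to the equation $g'h=hg'$ for every $g'>e$ in $G$; taking $g'=g$ contradicts the choice of $g,h$. The main obstacle is the concrete construction of $a_1,a_2,b_1,b_2$ realizing (a)--(c) jointly and carrying out the case analysis to confirm that no escape is possible; in particular the incomparability of $a$ and $b$ in $G$ is what prevents the convenient degeneration $c_{21}=0$ that otherwise trivialises the \RDP$_1$ condition. Once both implications are proved, the three statements are equivalent, and the final sentence follows at once from (iii) and \cite[Ex 1.3.28]{Gla}.
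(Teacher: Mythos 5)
Your outline of the overall strategy --- (iii) $\Rightarrow$ (i),(ii) via \cite[Ex 1.3.28]{Gla}, and the converse by contradiction through a forced $\mathbf{com}$ relation --- coincides with the paper's, but the proposal stops exactly where the proof has to begin: you write that ``the main obstacle is the concrete construction of $a_1,a_2,b_1,b_2$ realizing (a)--(c) jointly and carrying out the case analysis,'' and you never produce these elements, verify the equality $a_1*a_2=b_1*b_2$, or rule out the degenerate decompositions. That construction is the entire content of (i) $\Rightarrow$ (iii), so as written this is a plan rather than a proof. For comparison, the paper's witnesses are very simple and all have $\mathbb Z$-part $0$, so your concerns about ``linear equations among the $\mathbb Z$-parts'' and ``overlapping indices of the semidirect-product shift'' do not arise (no shift occurs when the $\mathbb Z$-part is $0$): one takes incomparable $a,b>e$ with a common upper bound $d$, and lets $a_1,a_2,b_1,b_2$ be supported at index $0$ with values $a$, $a^{-1}d$, $b$, $b^{-1}d$ respectively, so that both products equal the element with value $d$ at index $0$. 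The ``largest nonzero index'' argument shows every $c_{jk}$ vanishes at all positive indices; then $c^{12}_0=e$ would give $c^{11}_0=a$ and $b=ac^{21}_0$ with $c^{21}_0\ge e$, i.e.\ $a\le b$ or $a=b$, contradicting incomparability, so $c^{12}_0>e$ and symmetrically $c^{21}_0>e$. This is the step your sketch gestures at (``incomparability forbids the degenerate decompositions'') without carrying it out.

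The descent step also needs more than you give it. Once $c^{12}_0>e$ and $c^{21}_0>e$ are established, \emph{every} positive element of the wreath product with $\mathbb Z$-part $0$ and support contained in the negative indices lies in $[0,c_{12}]\cap[0,c_{21}]$, because the order at $\mathbb Z$-part $0$ only inspects the largest differing coordinate. The paper then takes non-commuting $x,y>e$ (these exist because $G$ is directed and non-commutative) and the elements $x'$, $y'$ supported at indices $-2,-1$ with values $(x,y)$ and $(y,x)$; the condition $c_{12}\,\mathbf{com}\,c_{21}$ forces $x'*y'=y'*x'$, whose coordinate at $-2$ reads $xy=yx$, a contradiction. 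Your single-index variant (place $g'$ and $h$ at a common negative index) would also work, but the containment $(0,\langle g'\mbox{ at }i\rangle)\le c_{12}$ that you assert rests precisely on the strict positivity of $c^{12}_0$, which your sketch does not establish. The gap is therefore real, though fillable along the lines you indicate.
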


\begin{proof}

(i) $\Rightarrow$ (ii), (iii). If $G$ is a trivial group, the statement is trivial. Thus we assume that $G$ is non-trivial. Suppose the converse of (iii), i.e., let $G$ be non-linearly ordered. Due to directness of $G$, there are two non-comparable elements $a$ and $b$ in $G$ such that there are $c,d\in G$ with $0<c<a,b< d$. We define two additional non-comparable elements $a'=a^{-1}d$ and $b'=b^{-1}d$, and four positive elements in the right wreath product $a_j= (0,\langle a^j_i: i \in \mathbb Z \rangle)$, $b_j= (0,\langle b^j_i: i \in \mathbb Z \rangle)$, $j=1,2$, where $a^j_i = e=b^j_i$ for $j\ne 0$, and $a^1_0=a$, $a^2_0=a'$, $b^1_0=b$, $b^2_0=b'$. Then $a_1a_2=b_1b_2$, and by the assumption, there are four positive elements $c_{jk}=(0,\langle c_i^{jk}: i \in \mathbb Z\rangle)$, $j,k=1,2$, such that for them RDP$_1$ decomposition holds. In addition, $c_{12}\, \mbox{\rm \bf com}\, c_{21}$. We claim that $c^{jk}_i=e$ for each $i>0$ ($j,k=1,2$). Indeed, if, e.g. $i_0=i_0^{11}:=\max\{i>0: c^{11}_i\ne e\},$ then $c_{i_0}^{11}>e$ which yields that $c_{i_0}^{12}<e$. Then there is an integer $i_1>i_0$ such that $c_{i_1}^{12}>e$ which yields $c_{i_1}^{11}<e$, a contradiction.

We assert that $c_{12}$ and $c_{21}$ are strictly positive elements in the right wreath product, that is $c_0^{12}, c_0^{21}>e$ holds in $G$. Indeed, if $c^{12}_0=e$, then $c_0^{11}=a$ and $c_0^{21}\ge e$. Indeed, either $c_0^{21}= e$, or $c_0^{21}\ne e$. In the first case we get $a^1_0=a=b^1_0=b$ which is impossible. In the second one, $c_0^{21}\ne e$, so that $c_0^{21}> e$. In any rate, we have $b=ac^{21}_0$ implying $a\le b$, a contradiction, therefore $c^{12}_0>e$. In the same way we prove that also $c^{21}_0>e$.

Since $G$ is non-trivial, there are two elements $x,y >e$ in $G$ such that $xy\ne yx$. Define two positive elements $x'=(0,\langle x_i: \in \mathbb Z \rangle )$ and $y'=(0,\langle y_i: \in \mathbb Z \rangle )$ such that $x_i=e=y_i$ for $i\ne -2,-1,$ and $x_{-2}=x$, $x_{-1}=y$ and $y_{-2}=y$, $y_{-1}=x$. Then $(0,\langle e\rangle )\le x'\le c_{12}$ and $(0,\langle e\rangle )\le y'\le c_{21}$ which means $x'y'=y'x'$ which is impossible while $xy\ne yx$.

Hence, our assumption that RDP$_1$ holds in the right wreath product for non-linear $G$ was false, whence, $G$ has to be linearly ordered.

In the same but dual way we proceed for the left right wreath product.

(iii) $\Rightarrow$ (i). If $G$ is linearly ordered, then the right and left wreath product is an $\ell$-group, so that it satisfies RDP$_2$ (see Proposition \ref{pr:5.1}), and finally, RDP$_1$ holds, too.
\end{proof}

{\bf Problem.} If a directed po-group $G$ satisfies RDP, does also the right (left) wreath product satisfy it?

A po-group $G$ is said to be {\it non-atomistic} if, for any $g>e$, there is a $g'>e$ such that $g>g'>e$.

\begin{proposition}\label{pr:5.5}
Let $G$ be a directed non-atomistic Abelian po-group. Then the following statements are equivalent \begin{enumerate}
\item[{\rm (i)}] $\mathbb Z\, \overrightarrow{\mbox{\rm wr}}\, G$ satisfies \RDP.
\item[{\rm (ii)}] $\mathbb Z\, \overleftarrow{\mbox{\rm wr}}\, G$ satisfies \RDP.
\item[{\rm (iii)}] $G$ satisfies \RDP.
\end{enumerate}
\end{proposition}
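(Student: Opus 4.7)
I would focus on the substantive direction (iii) $\Rightarrow$ (i), as (i) $\Rightarrow$ (iii) is immediate from Proposition \ref{pr:5.2}, and the corresponding left-wreath statement is handled by a mirrored argument (top-of-support replaced by bottom-of-support). So assume $G$ is a directed non-atomistic Abelian po-group with RDP. Since $G$ is Abelian, RDP is equivalent to RIP, and Proposition \ref{pr:5.3} then gives RIP in the right wreath product $W := \mathbb Z\,\overrightarrow{\mbox{wr}}\,G$. The task is to upgrade RIP in $W$ to the full RDP, which is nontrivial since $W$ is non-Abelian and the standard Abelian derivation of RDP from RIP does not transfer directly.

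Given a positive equation $a_1 + a_2 = b_1 + b_2$ in $W$, write $a_j = (n_j,\langle g^j_i\rangle)$, $b_j = (m_j,\langle h^j_i\rangle)$, and split on the common total $\mathbb Z$-component $n := n_1+n_2 = m_1+m_2$. If $n = 0$, all four elements lie in the $0$-slice $W_0 := \{(0,\langle g_i\rangle) \in W\}$, which is an Abelian directed sub-po-group of $W$ under componentwise $G$-addition with the reverse-lexicographic ordering by top-of-support. RIP of $W$ restricts to RIP of $W_0$ (any interpolator between $0$-slice elements has its $\mathbb Z$-component squeezed to $0$), and for Abelian directed po-groups RIP coincides with RDP, so the decomposition exists inside $W_0$. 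If $n > 0$, I would follow the case analysis of Theorem \ref{th:4.1}: the $\mathbb Z$-components $p_{jk}$ of the decomposition are determined (up to one free parameter) by the linear system $p_{11}+p_{12}=n_1$, $p_{21}+p_{22}=n_2$, $p_{11}+p_{21}=m_1$, $p_{12}+p_{22}=m_2$; each $c_{jk}$ with $p_{jk} > 0$ is automatically positive in $W$, while those with $p_{jk} = 0$ require right-wreath positivity of their $G$-parts, which I solve coordinatewise using directness of $G$ together with RDP in $G$.

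The principal obstacle is the sub-case analogous to Theorem \ref{th:4.1}(ix), where $n_1 = m_1$ and $n_2 = m_2$ are both positive and the forced choice $p_{11} = n_1$ places both $c_{12}$ and $c_{21}$ into $W_0$. There the decomposition leaves $c^{11}_i$ free at each index, with $c^{12}_{i+n_1} = g^1_i - c^{11}_i$ and $c^{21}_{i+n_1} = h^1_i - c^{11}_i$, and one must choose $c^{11}_i$ so that both resulting $0$-slice elements are right-wreath positive. Working from the top-of-support index downward, at each step I would either match $c^{11}_i$ to cancel both entries simultaneously (pushing the top-of-support below) or pick $c^{11}_i$ strictly below both $g^1_i$ and $h^1_i$ in $G$, so that $c^{12}_{i+n_1}$ and $c^{21}_{i+n_1}$ both become strictly positive in $G$. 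The non-atomistic hypothesis on $G$ is invoked here to guarantee the existence of a sufficiently refined strict common lower bound, ruling out the degenerate atomic configurations that could otherwise prevent both components from being simultaneously positive. Iterating the procedure while checking consistency against the governing equation $g^1_i + g^2_{i+n_1} = h^1_i + h^2_{i+n_1}$ then yields the RDP decomposition in $W$, completing the proof.
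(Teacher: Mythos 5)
You correctly reduce to (iii) $\Rightarrow$ (i) via Proposition \ref{pr:5.2} and the dual argument for the left product, and two of your ideas are sound and in fact cleaner than the paper's treatment: (a) for the level-$0$ case $n=0$, restricting RIP of $W$ (obtained from Proposition \ref{pr:5.3}) to the Abelian directed zero-slice $W_0$ and invoking RIP $=$ RDP for Abelian po-groups does replace the paper's long case (1); (b) for the case $n_1=m_1>0$ your free-parameter, top-down choice of $c^{11}_i$ works --- though there non-atomisticity is not actually needed, since $c_{11}$ and $c_{22}$ carry strictly positive $\mathbb Z$-components and their $G$-entries are unconstrained, so any strict common lower bound of $g^1_{i_0}$ and $h^1_{i_0}$ (which exists by directedness alone) suffices. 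You have misplaced where the non-atomisticity hypothesis does its work.

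The genuine gap is in the cases you dismiss with ``solve coordinatewise using directness of $G$ together with RDP in $G$'', namely $(0,\langle x_i\rangle)*(n,\langle y_i\rangle)=(0,\langle u_i\rangle)*(n,\langle v_i\rangle)$, its mirror, and $(0,\langle x_i\rangle)*(n,\langle y_i\rangle)=(n,\langle u_i\rangle)*(0,\langle v_i\rangle)$; these are the paper's cases (2)--(4) and constitute the bulk of its proof. There \emph{three} of the four blocks are forced into the zero slice and must simultaneously be wreath-positive, while they all depend on a single free $G$-parameter per coordinate ($c^{12}_i=x_i-c^{11}_i$, $c^{21}_i=u_i-c^{11}_i$, $c^{22}_i=y_i-u_i+c^{11}_i$). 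Coordinatewise RDP in $G$ does not apply directly, because a wreath-positive zero-slice element constrains only its top entry: the entries $x_i$ for $i<i_0(x)$ need not lie in $G^+$. More seriously, the simultaneous strict positivity at the top coordinate that your case-(ix) strategy relies on can be impossible: take $G=\mathbb Q\times\mathbb Q$ with the componentwise order (a divisible $\ell$-group, hence directed, Abelian, non-atomistic, with RDP) and $x_{i_0}=(1,0)$, $u_{i_0}=(0,1)$; the only common lower bound of these in $G^+$ is $e$, so one cannot choose $c^{11}_{i_0}$ making $c^{11}_{i_0}$, $x_{i_0}-c^{11}_{i_0}$ and $u_{i_0}-c^{11}_{i_0}$ all strictly positive. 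The positivity of $c_{11}$ must then be secured at a strictly lower coordinate, and it is exactly this cascading, multi-coordinate adjustment --- where non-atomisticity is used to perturb the entries at the next coordinate down without destroying the equations --- that the paper carries out in its tables (A)--(E) and that is missing from your proposal.
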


\begin{proof}
Again it is enough to assume that $G$ is non-trivial.

(i) $\Rightarrow$ (iii). It follows from Proposition \ref{pr:5.2}.

(iii) $\Rightarrow$ (i).
To exhibit RDP, it is enough to assume that in the equality $xy=uv$ all elements are strictly positive.

As usually, we will write $(n,\langle g_i\rangle)$ instead of $(n,\langle g_i: i \in \mathbb Z\rangle)$.

The positive cone of $\mathbb Z\, \overrightarrow{\mbox{wr}}\, G$ is the set of the following elements $\{(0,\langle x_i\rangle): x_j>e$ where $j$ is the greatest index $i$ such that $g_i\ne e\}\cup \{(n,\langle x_i\rangle): n>0\}$.

For $x=(n,\langle x_i\rangle)$ we denote by $supp(x):=\{i \in \mathbb Z: x_i\ne e\}$, and $i_0(x):=\max\{i: i \in supp(x)\}$.

We have the following cases:

(1) $(0, \langle  x_i \rangle) *(0,\langle  y_i\rangle)=
(0,\langle  u_i \rangle) *(0,\langle  v_i \rangle)$ for $x_i,y_i,u_i,v_i \in G$ for each $i \in I$.

Assume $x=(0,\langle x_i\rangle)$, $y=(0,\langle y_i\rangle)$, $u=(0,\langle u_i\rangle)$ and $v=(0,\langle x_i\rangle)$, and we can assume that $x,y,u,v$ are strictly positive.

Then $x_iy_i=u_iv_i$ for each $i \in \mathbb Z$ and $\max\{i_0(x),i_0(y)\}=\max\{i_0(u),i_0(v)\}$. If $i_0:=i_0(x)=i_0(y)=i_0(u)=i_0(v)$, then using the RDP of $G$, we have the following decomposition table

$$
\begin{matrix}
x_{i_0} &\vline & c_{i_0}^{11} & c_{i_0}^{12}\\
y_{i_0} &\vline & c_{i_0}^{21} & c_{i_0}^{22}\\
  \hline     &\vline      &u_{i_0} & v_{i_0}
\end{matrix}\ \ .
$$
where all $c_{i_0}^{jk}\ge e$, $j,k=1,2$.


%

We assert that there are positive elements $c_{jk}=(0,\langle c'^{jk}_i\rangle)$ for $j,k=1,2$ that form an RDP decomposition. Indeed, if $i<i_1$ or $i>i_0$, we set $c'^{jk}_i=e$. Let $i=i_0$. If all $c^{jk}_{i_0}>e$, we are ready and we put $c'^{jk}_{i_0}= c^{jk}_{i_0}$. Of course, at least two $c^{jk}_{i_0}$'s have to strictly positive. Now assume that, say $c^{12}_{i_0}=e$. Then $c^{11}_{i_0}>e$ and $c^{22}_{i_0} >e$. Then $c_{11}$ and $c_{22}$ can be chosen to be positive. If also $c^{21}_{i_0}>e$, then $c_{21}$ can be chosen to be positive. If $c^{21}_{i_0}=e$, due to directness of $G$, there is $d \in G$ such that $d> c^{jk}_{i_0-1}$ for $j,k=1,2$. Then

$$
\begin{matrix}
x_{i_0-1} &\vline & c_{i_0-1}^{11} & c_{i_0-1}^{12}\\
y_{i_0-1}&\vline & c_{i_0-1}^{21} & c_{i_0-1}^{22}\\
  \hline     &\vline      &u_{i_0-1} & v_{i_0-1}
\end{matrix}\ \
\quad =: \quad \begin{matrix}
x_{i_0-1} &\vline & d^{-1}c_{i_0-1}^{11} & dc_{i_0-1}^{12}\\
y_{i_0-1}&\vline & dc_{i_0-1}^{21} & d^{-1}c_{i_0-1}^{22}\\
  \hline     &\vline      &u_{i_0-1} & v_{i_0-1}
\end{matrix}\ \  (A).
$$
Here the elements in the second table have the property $dc_{i_0-1}^{12}>e$ and $dc_{i_0-1}^{21}>e$. From this table we can determine $c_{12}>e$ and $c_{21}>e$.

Now assume that e.g. $i_0(x)<i_0(y):=i_0$. Then we have the following simple decomposition table

$$
\begin{matrix}
x_{i_0}&\vline & e & e\\
y_{i_0} &\vline & c_{i_0}^{21} & c_{i_0}^{22}\\
  \hline     &\vline      &u_{i_0}& v_{i_0}
\end{matrix}\ \ .
$$

If $i_0(u)=i_0(v)$, then $c_{i_0}^{21}, c_{i_0}^{21}>e$, and if $i_0(u)<i_0(v)$, then $c_{i_0}^{21}=e$ and $c_{i_0}^{22}= v_{i_0}^{21}>e$.

In the first case, for each $i$ such that $i_0(x)<i<i_0(y)$, we have the decomposition $c_i^{11}=c_i^{12}=e$ and $c_i^{21}=u_i$, $c_i^{22}=v_i$. For $i=i_0(x)$ using directness of $G$, we have the following table

$$
\begin{matrix}
x_i &\vline & d_ic_i^{11} & c_i^{12}\\
y_i&\vline & c_i^{21} & c_i^{22}d_i\\
  \hline     &\vline      &u_i & v_i
\end{matrix}\ \ .
$$

If $c_i^{12}=e$, then $d_ic_i^{11}>e$ and using non-atomicity of $G$, we can assume without loss of generality $c_i^{12}>e$ and $d_ic_i^{11}>e$. If $c_i^{12}>e$, then

$$
\begin{matrix}
x_i &\vline & d_ic_i^{11}c_i^{12} & e\\
y_i&\vline & (c_i^{12})^{-1}c_i^{21} & c_i^{12}c_i^{22}d_i\\
  \hline     &\vline      &u_i & v_i
\end{matrix}\ \ (B)
$$
and using again non-atomicity of $G$, we can assume that the elements in the first row are strictly positive. Hence, we can already construct elements $c_{jk}$ in the right wreath product to be strictly positive, see (A).

Now assume the second case, i.e. $i_0(u)<i_0(v)$. For $i_0=i_0(v)$, we have the decomposition $c_{i_0}^{11}=c_{i_0}^{12}=c_{i_0}^{21}=e$ and $c_{i_0}^{22} =v_{i_0}>e$. For $i$ such that $\max\{i_0(x),i_0(u)\}<i<i_0$, we have the table $c_i^{11}=c_i^{12}=c_i^{21}=e$ and $c_i^{22} =v_i$. If $i=i_0(x)=i_0(u)$, using directness of $G$, we have the decomposition

$$
\begin{matrix}
x_i &\vline & d_ic_i^{11} & c_i^{12}\\
y_i&\vline & c_i^{21} & c_i^{22}d_i\\
  \hline     &\vline      &u_i & v_i
\end{matrix}\ \ .
$$

Using non-atomicity of $G$, if necessary, we can assume that the all elements in the left column are strictly positive.

If e.g. $i_0(x)<i_0(u)$, we have, for $i=i_0(u)$, a decomposition $c_i^{11}=c_i^{12}=e$ and $c_i^{21}=u_i>0$, $c_i^{22}=v_i$. For $i$ such that $i_0(x)<i<i_0(u)$, we have decomposition $c_i^{11}=c_i^{21}=e$, $c_i^{21}=u_i$, and $c_i^{22}=v_i$. For $i=i_0(x)$, we have the table

$$
\begin{matrix}
x_i &\vline & d_ic_i^{11} & c_i^{12}\\
y_i&\vline & c_i^{21} & c_i^{22}d_i\\
  \hline     &\vline      &u_i & v_i
\end{matrix}\ \ .
$$
If $c_i^{12}=e$, then $d_ic_i^{11}>e$ and using non-atomicity, we can assume that both elements in the first row are strictly positive. If $c_i^{12}>e$, we proceed as in table (B). Hence, we can construct strictly positive elements $c_{jk}$ in the right wreath product, compare with (A).

All other possibilities in case (1) can be proved analogously. Hence, (1) is proved completely.

(2) $(0,\langle x_i\rangle)* (n,\langle y_i\rangle)=(0,\langle y_i\rangle)* (n,\langle v_i\rangle)$, $n >0$. Then $x_{i_0(x)}>e$, $u_{i_0(y)}>e$, and $x_iy_i=u_iv_i$ for $i \in \mathbb Z$. Let $i_0=\max\{i: i_0(x),i_0(y),i_0(u),i_0(v)\}$. If $i_0(x)<i_0(u)$, then for $i=i_0(u)$, we have the decomposition $c_i^{11}=c_i^{12}=e$, $c_i^{21}=u_i>e$ and $c_i^{22}=v_i$. For $i>i_0(u)$ or for $i_0(x)<i<i_0(u)$, we have the decomposition $c_i^{11}=c_i^{12}=e$, $c_i^{21}=u_i$ and $c_i^{22}=v_i$. For $i=i_0(x)$, we use directness of $G$ and the table

$$
\begin{matrix}
x_i &\vline & d_ic_i^{11} & c_i^{12}\\
y_i&\vline & c_i^{21} & c_i^{22}d_i\\
  \hline     &\vline      &u_i & v_i
\end{matrix}\ \ (C).
$$
If $c_i^{12}=e$, then $d_ic_i^{11}>e$ and using non-atomicity of $G$, without loss of generality, we can assume that all elements in the first row and the first column  of (C) are strictly positive. If $c_i^{12}>e$, we take  table

$$
\begin{matrix}
x_i &\vline & d_ic_i^{11}c_i^{12} & e\\
y_i&\vline & (c_i^{12})^{-1}c_i^{21} & c_i^{12}c_i^{22}d_i\\
  \hline     &\vline      &u_i & v_i
\end{matrix}\ \ (D) ,
$$
where $d_ic_i^{11}c_i^{12}>e$, and using non-atomicity of $G$, we can again assume that all elements of the first row and the first column of table (C) are strictly positive.

The case $i_0(x)>i_0(u)$ can be treated in a similar way.

If $i_0(x)=i_0(u)$, for $i>i_0(u)$, then we have the decomposition $c_i^{11}=c_i^{12}=c_i^{21}=e$ and $c_i^{22}= v_i$. For $i=i_0(u)$, we have $x_i,u_i>e$. Using directness of $G$, for every $i\in \mathbb Z$, there is $d_i\le y_i,v_i$. Then $x_iy_id_i^{-1}= u_iv_id_i^{-1}$, and we have the following RDP table

$$
\begin{matrix}
x_i &\vline & c_i^{11} & c_i^{12}\\
y_i&\vline & c_i^{21} & c_i^{22}d_i\\
  \hline     &\vline      &u_i & v_i
\end{matrix}\ \ (E).
$$
If $c_i^{21}=e$, then $c_i^{11}>e$ and using non-atomicity of $G$, we can assume that  all elements in the first raw and in the first column of (E) are strictly positive. The same is true if $c_i^{12}=e$. Now let $c_i^{12}>e$ and $c_i^{21}>e$. If $c_i^{11}>e$, then we have that all elements in the first raw and in the first column of (E) are strictly positive. If $c_i^{11}=e$, $(*)$, we check elements of RDP decomposition for $i=i_0(u)-1$. For it we have the table (C). It can be rewritten in the form

$$
\begin{matrix}
x_i &\vline & c & c^{-1}d_ic_i^{11}c_i^{12}\\
y_i&\vline & c^{-1}d_ic_i^{11}c_i^{21} & c(d_ic_i^{11})^{-1}c_i^{22}d_i\\
  \hline     &\vline      &u_i & v_i
\end{matrix}\ \ ,
$$
where $c$ is an arbitrary strictly positive element of $G$.

Now let $i_1$ be an integer such that, for each $i\le i_1$, $x_i,y_i,u_i,v_i=e$. Then for $i$ such that $i_1\le i<i_0(u)$ $[i_1\le i<i_0(u)-1$ in case $(*)$], we take table (C), and for $i<i_1$, we set $c_i^{jk}=e$, $j,k=1,2$.

Hence, for (2), we have the following RDP table

$$
\begin{matrix}
(0,\langle x_i\rangle) &\vline & (0,\langle c_i^{11}\rangle) & (0,\langle c_i^{12}\rangle)\\
(n,\langle y_i\rangle)&\vline & (0,\langle c_i^{21}\rangle) & (n,\langle c_i^{22}\rangle)\\
  \hline     &\vline      &(0,\langle u_i\rangle) & (n,\langle v_i\rangle)
\end{matrix}\ \ .
$$

(3) $(n,\langle x_i\rangle)* (0,\langle y_i\rangle)=(n,\langle u_i\rangle)* (0,\langle v_i\rangle)$, $n>0$. Then $y_{i_0(y)}>e$, $v_{i_0(v)}>e$, and $x_iy_{i+n}=u_iv_{i+n}$ for $i \in \mathbb Z$. To exhibit cases $i_0(y)<i_0(v)$, $i_0(y)<i_0(v)$, and $i_0(y)=i_0(v)$, we follow methods of the proof of (2), concentrating to elements of RDP table in the second row and the second column.

Hence, for (3), we have the following RDP table

$$
\begin{matrix}
x_i &\vline & d_ic_i^{11}c_i^{12} & e\\
y_i&\vline & c_i^{21}(c_i^{12})^{-1} & c_i^{22}c_i^{12}d_i\\
  \hline     &\vline      &u_i & v_{i+n}
\end{matrix}\ \ ,
$$

$$
\begin{matrix}
(n,\langle x_i\rangle) &\vline & (n,\langle c_i^{11}\rangle) & (0,\langle c_{i-n}^{12}\rangle)\\
(0,\langle y_i\rangle)&\vline & (0,\langle c_{i-n}^{21}\rangle) & (0,\langle c_{i-n}^{22}\rangle)\\
  \hline     &\vline      &(n,\langle u_i\rangle) & (0,\langle v_i\rangle)
\end{matrix}\ \ .
$$

(4) $(0,\langle x_i\rangle)* (n,\langle y_i\rangle)=(n,\langle u_i\rangle)* (0,\langle v_i\rangle)$, $n>0$. Then $x_{i_0(x)}>e$, $v_{i_0(v)}>e$, and $x_iy_i=u_iv_{i+n}$ for $i \in \mathbb Z$. Assume $i_0(x)<i_0(v)-n$. If $i>i_0(v)-n$, we put $c_i^{11}=c_i^{12}=c_i^{22}=e$ and $c_i^{21}=u_i$. If $i=i_0(v)-n$, then we put $c_i^{11}=c_i^{12}=e$, $c_i^{21}=u_i$, $c_i^{22}=v_{i+n}>e$.  If $i=i_0(x)$, then

$$
\begin{matrix}
x_i &\vline & x_i & e\\
y_i&\vline & x_i^{-1}u_i & v_{i+n}\\
  \hline     &\vline      &u_i & v_{i+n}
\end{matrix}\ \ ,
$$
but $x_i>e$, so by non-atomicity of $G$, we have for any $e<c<x_i$ a table

$$
\begin{matrix}
x_i &\vline & x_ic^{-1} & c\\
y_i&\vline & x_i^{-1}u_ic & c^{-1}v_{i+n}\\
  \hline     &\vline      &u_i & v_{i+n}
\end{matrix}\ \ (F),
$$
where the first row has strictly positive elements.

If $i$ satisfies $i_0(x)<i<i_0(v)-n$, we take $c_i^{11}=c_i^{12}=e$, $c_i^{21}=u_i$, $c_i^{22}=v_{i+n}$.

If $i_1\le i<i_0(x)$, where $i_1$ is an integer such that if $i<i_1$, then $x_i,y_i,u_i,v_i=e$, we use directness, and for $i<i_1$, we set $c_i^{jk}=e$, $j,k=1,2$.

Now let $i_0(x)=i_0(v)-n$. If $i>i_0(v)-n$, we put $c_i^{11}=c_i^{12}=c_i^{22}=e$ and $c_i^{21}=u_i$. If $i=i_0(v)-n$, by directness, for each $i \in \mathbb Z$, there is $d_i\le x_i,y_i,u_i,v_{i+n}$. Hence, we have a table

$$
\begin{matrix}
x_i &\vline & d_ic_i^{11}c_i^{12} & e\\
y_i&\vline & c_i^{21}(c_i^{12})^{-1} & c_i^{22}c_i^{12}d_i\\
  \hline     &\vline      &u_i & v_{i+n}
\end{matrix}\ \ ,
$$
where $d_ic_i^{11}c_i^{12}$ and $c_i^{22}c_i^{12}d_i$ are strictly positive. For $i= i_0(v)-n$, we use directness, and we always can guarantee to be $c_i^{12}>e$. For the rest of this paragraph, we continue in the same way as in the previous one.

Now for the case $i_0(v)-n\le i_0(x)$, we proceed in the same way as for $i_0(v)-n\ge i_0(x)$. Finally, we have RDP table for (4)

$$
\begin{matrix}
(0,\langle x_i\rangle) &\vline & (0,\langle c_i^{11}\rangle) & (0,\langle c_{i-n}^{12}\rangle)\\
(n,\langle y_i\rangle)&\vline & (n,\langle c_{i-n}^{21}\rangle) & (0,\langle c_{i-n}^{22}\rangle)\\
  \hline     &\vline      &(n,\langle u_i\rangle) & (0,\langle v_i\rangle)
\end{matrix}\ \ .
$$

(5) $(n,\langle x_i\rangle)* (0,\langle y_i\rangle)=(m_1,\langle u_i\rangle)*(m_2,\langle v_i\rangle)$, where $m_1,m_2>0$, $n=m_1+m_2$. This yields $x_iy_{i+n}=u_iv_{i+m_1}$, and therefore, we have the following RDP table

$$
\begin{matrix}
(n,\langle x_i\rangle) &\vline & (m_1,\langle u_i\rangle) & (m_2,\langle u^{-1}_{i-m_1}x_{i-m_1}\rangle)\\
(0,\langle y_i\rangle)&\vline & (0,\langle e\rangle) & (0,\langle y_i\rangle)\\
  \hline     &\vline      &(m_1,\langle u_i\rangle) & (m_2,\langle v_i\rangle)
\end{matrix}\ \ .
$$

(6) $(0,\langle x_i\rangle)* (n,\langle y_i\rangle)=(m_1,\langle u_i\rangle)*(m_2,\langle v_i\rangle)$, where $m_1,m_2>0$, $n=m_1+m_2$. We rewrite it in the form $(m_1,\langle u_i\rangle)*(m_2,\langle v_i\rangle)=(0,\langle x_i\rangle)* (n,\langle y_i\rangle)$, and for it we have $u_iv_{i+m_1}=x_iy_i$. For it, we have RDP table

$$
\begin{matrix}
(m_1,\langle u_i\rangle) &\vline & (0,\langle x_i\rangle) & (m_1,\langle x^{-1}_iu_i\rangle)\\
(m_2,\langle v_i\rangle)&\vline & (0,\langle e\rangle) & (m_2,\langle v_i\rangle)\\
  \hline     &\vline      &(0,\langle x_i\rangle) & (n,\langle y_i\rangle)
\end{matrix}\ \ .
$$

(7) $(n_1,\langle x_i\rangle)* (n_2,\langle y_i\rangle)=(m_1,\langle u_i\rangle)*(m_2,\langle v_i\rangle)$, where $n_1,n_2,m_1,m_2>0$, $n_1+n_2=n=m_1+m_2$, and $m_1>n_1$. Then $x_iy_{i+n_1}=u_iv_{i+m_1}$, and we use the following RDP table

$$
\begin{matrix}
(m_1,\langle u_i\rangle) &\vline & (n_1,\langle x_i\rangle) & (m_1-n_1,\langle x^{-1}_{i-n_1}u_{i-n_1}\rangle)\\
(m_2,\langle v_i\rangle)&\vline & (0,\langle e\rangle) & (m_2,\langle v_i\rangle)\\
  \hline     &\vline      &(n_1,\langle x_i\rangle) & (n_2,\langle y_i\rangle)
\end{matrix}\ \  \mbox{ if } m_1>n_1.
$$

(8) $(n_1,\langle x_i\rangle)* (n_2,\langle y_i\rangle)=(m_1,\langle u_i\rangle)*(m_2,\langle v_i\rangle)$, where $n_1,n_2,m_1,m_2>0$, $n_1+n_2=n=m_1+m_2$, and $n_1>m_1$. Then $x_iy_{i+n_1}=u_iv_{i+m_1}$, and we use the following RDP table

$$
\begin{matrix}
(n_1,\langle x_i\rangle) &\vline & (m_1,\langle u_i\rangle) & (n_1-m_1,\langle u^{-1}_{i-m_1}x_{i-m_1}\rangle)\\
(n_2,\langle y_i\rangle)&\vline & (0,\langle e\rangle) & (n_2,\langle y_i\rangle)\\
  \hline     &\vline      &(m_1,\langle u_i\rangle) & (m_2,\langle v_i\rangle)
\end{matrix}\ \  \mbox{ if } n_1>m_1.
$$

(9) $(n_1,\langle x_i\rangle)* (n_2,\langle y_i\rangle)=(m_1,\langle u_i\rangle)*(m_2,\langle v_i\rangle)$, where $n_1,n_2,m_1,m_2>0$, $n_1+n_2=n=m_1+m_2$, and $n_1=m_1$. Then $x_iy_{i+n_1}=u_iv_{i+n_1}$.
Let $i_1$ be an integer such that $x_i,y_i,u_i,v_i=e$ if each $i<i_1$ and similarly, let $i_0$ be another integer such that $x_i,y_i,u_i,v_i=e$ for each $i>i_0$. For those $i$, we set $c_i^{11}, c_i^{12}, c_i^{21}, c_i^{22}=e$ and for $i$ with $i_1\le i\le i_0$, we apply directness of $G$, there is $d_i\le x_i,y_i,u_i,v_i$, so we have $d^{-1}_ix_iy_{i+n_1}d^{-1}_{i+n_1} = d_i^{-1}u_iv_{i+n_1}d_{i+n_1}^{-1}$, and we obtain table

$$
\begin{matrix}
x_i &\vline & d_ic_i^{11} & c_i^{12}\\
y_{i+n_1}&\vline & c_i^{21} & c_i^{22}d_{i+n_1}\\
  \hline     &\vline      &u_i & v_{i+n_1}
\end{matrix}\ \ .
$$
Without loss of generality, we can assume that $c^{12}_i>e$, if not, we take an arbitrary $c>e$ so that in the table

$$
\begin{matrix}
x_i &\vline & d_ic_i^{11}c^{-1} & c\\
y_{i+n_1}&\vline & c_i^{21}c & c^{-1}c_i^{22}d_{i+n_1}\\
  \hline     &\vline      &u_i & v_{i+n_1}
\end{matrix}\ \ ,
$$
the elements in the right-up-hand and left-down-hand are strictly positive.

Then we have an RDP table for (9)

$$
\begin{matrix}
(n_1,\langle x_i\rangle) &\vline & (n_1,\langle d_ic_i^{11}\rangle) & (0,\langle c_{i-n_1}^{12}\rangle)\\
(n_2,\langle y_i\rangle)&\vline & (0,\langle c_{i-n_1}^{21}\rangle) & (n_2,\langle c_{i-n_1}^{22}\rangle)\\
  \hline     &\vline      &(n_1,\langle u_i\rangle) & (n_2,\langle v_i\rangle)
\end{matrix}\ \ .
$$

Dually we prove the equivalence (ii) $\Leftrightarrow$ (iii).
\end{proof}

\section{Conclusion}

We have established conditions when the lexicographic product $G=G_1 \lex G_2$ has some kind of RDP's where $G_1$ is a po-group and $G_2$ is a directed po-group. Theorem \ref{th:3.1} gives an answer when $G$ has RDP, RDP$_1$ or RDP$_2$, under the condition $G_1$ is a linearly ordered group. Theorem \ref{th:3.3} gives an answer that $G$ has RDP if $G_1$ is an antilattice. We note that we do not know any relevant answer for a question when $G$ has RDP$_1$. Theorem \ref{th:4.1} answers when Wreath product $A\, \mbox {\rm Wr } G$ has RDP, RDP$_1$ and RDP$_2$.  In Section 5, we have established conditions when the right and left wreath products have RDP's.

Applications of these results are important for the representation of effect algebras, pseudo effect algebras, MV-algebras and pseudo MV-algebras by an interval in the lexicographic product or in the wreath product, or for kite $n$-perfect pseudo effect algebras, see \cite{BoDv}.

\end{document}